\theoremstyle{definition}
{
\newtheorem{Def}{{\rm Definition}}
\newtheorem{Ex}{{\rm Example}}
\newtheorem{Rem}{{\rm Remark}}

}
\newtheorem{Cor}{Corollary}
\newtheorem{Prop}{Proposition}
\newtheorem{Thm}{Theorem}
\newtheorem{Lem}{Lemma}
\begin{document}
\title[Explicit smooth maps on 7-dimensional manifolds of codimension -3]{A note on explicit smooth maps on 7-dimensional manifolds into the 4-dimensional Euclidean space}
\author{Naoki Kitazawa}
\keywords{Singularities of differentiable maps: fold maps and special generic maps. Differential topology of manifolds: 7-dimensional or higher dimensional closed and simply-connected manifolds. Reeb spaces. \\
\indent {\it \textup{2020} Mathematics Subject Classification}: Primary~57R45. Secondary~57R19.}
\address{Institute of Mathematics for Industry, Kyushu University, 744 Motooka, Nishi-ku Fukuoka 819-0395, Japan \\
TEL (Office): +81-92-802-4402 \\
FAX (Office): +81-92-802-4405}
\email{n-kitazawa@imi.kyushu-u.ac.jp}
\maketitle
\begin{abstract}
%Morse functions always exist densely on closed manifolds and singular points, appearing discretely, tell us information on homology groups and some information on homotopy of the manifolds. As a specific case, Reeb's theorem on Morse functions on closed manifolds states that a homotopy sphere is characterized as a manifold admitting just $2$ singular points except $4$-dimensional homotopy spheres:  a $4$-dimensional standard sphere is characterized in this way.

A {\it fold} map is a smooth map at each singular point of which it is represented as the product map of a Morse function and the identity map on an open ball. A {\it special generic map} is a fold map such that the Morse function can be taken as a natural height function on an unit disk. The class of special generic maps includes a Morse function with exactly two singular points on a closed manifold, characterizing a sphere topologically (except $4$-dimensional cases) as the Reeb's theorem shows, and canonical projections of unit spheres.

It has been known that so-called {\it exotic} spheres do not admit special generic maps into Euclidean spaces whose dimensions are sufficiently high and smaller than the dimensions of the spheres. Exotic $7$-dimensional homotopy spheres do not admit special generic maps into the $4$-dimensional Euclidean space for example. We can easily obtain special generic maps on fundamental manifolds such as ones represented as connected sums of products of two standard spheres and in considerable cases, smooth manifolds resembling topologically them and different from them do not admit special generic maps. These interesting results are due to studies of Saeki, Sakuma and Wrazidlo since the 1990s.

In the present paper, we present new results on explicit smooth maps including fold maps on $7$-dimensional manifolds into the $4$-dimensional Euclidean space and meanings in algebraic topology and differential topology of manifolds. Moreover, the author obtained related results before motivated by the studies before and they are reviewed in the presentation of the new results. We also present new discussions and results related to the results for $7$-dimensional manifolds and maps on them for fold maps between manifolds of general dimensions.
%  For example, we investigate algebraic topological restrictions on {\it Reeb spaces} of the fold maps. %They are essential tools in studying manifolds by using generic maps, and the source manfolds. We also %show flexibility of homology groups of the Reeb spaces and the manifolds of the domains, for example.
% Abstract text, usually no more than 200 words.
% Avoid bibliographic references (\cite) and complicated mathematics.
% Please do not use custom macros here, as this abstract has to
% be able to stand alone.  You may use standard tex/latex/AMS macros.
\end{abstract}

% Leave these items like this, and the journal will fill them in.
% \received{Month Day, Year}   % receive date (for example: October 11, 1999)
% \revised{Month Day, Year}    % date of revision; omit, if no revision;
%                             % if multiple revisions, separate by commas
% \published{Month Day, Year}  % publish date\submitted{Bill Murray}      % Name of Journal's Editor,
% who handled Article
% \volumeyear{2014} % Volume Year
% \volumenumber{16} % Volume Number
% \issuenumber{2}   % Issue Number
% \startpage{1}     % PageNumber of first page
% \articlenumber{1} % Sequence number of article within issue
% If copyright is retained by author, comment this out:
% \owner{International Press}

\maketitle
\section{Introduction and fold maps.}
\label{sec:1}
The present paper is on explicit smooth maps including {\it fold} maps on $7$-dimensional closed manifolds into the $4$-dimensional Euclidean space and general discussions on fold maps between manifolds of general dimensions. {\it Fold} maps are, in short, higher dimensional versions of Morse functions and important tools in studying geometric properties of manifolds in the branch of the singularity theory of differentiable maps and applications to geometry of manifolds as {\it Morse} functions are in so-called Morse theory: Morse theory is in a sense regarded as specific classical theory of the branch. $7$-dimensional manifolds form an interesting class of manifolds. Especially, Milnor's discovery of $7$-dimensional {\it exotic} spheres, which are homotopy spheres having differentiable structures different from those of unit spheres \cite{milnor}, has made this class more attractive and this class is still attractive. Recent attractive studies on these manifolds are \cite{crowleyescher}, \cite{crowleynordstrom} and \cite{kreck} for example: note also that these manifolds are closed and simply-connected.

Let us introduce terminologies and notation on topological and geometric objects in the present paper. ${\mathbb{R}}^k$ denotes the $k$-dimensional Euclidean space, endowed with the standard Euclidean metric. $||x|| \geq 0$ denotes the distance between $x$ and the origin $0$ in the Euclidean space.
A {\it homotopy} sphere means a manifold which is homeomorphic to a sphere. $S^k:=\{x \in {\mathbb{R}}^{k+1} \mid ||x||=1.\}$ denotes the $k$-dimensional unit sphere and we call a (smooth) homotopy sphere which is a copy, or equivalently, diffeomorphic to this a ({\it $k$-dimensional}) {\it standard sphere}.
An {\it exotic} sphere is a smooth homotopy sphere which is not diffeomorphic to any standard sphere. $D^k:=\{x \in {\mathbb{R}}^{k} \mid ||x||=1.\}$ denotes the $k$-dimensional unit disk and we call a smooth manifold which is a copy, or equivalently, diffeomorphic to this a ({\it k-dimensional}) {\it standard disk}.
Throughout the present paper, manifolds and maps between them are smooth and of class $C^{\infty}$ unless otherwise stated. Diffeomorphisms on manifolds are always smooth and the {\it diffeomorphism group} of a manifold is the group of all diffeomorphism on it. For bundles whose fibers are manifolds, the structure groups are subgroups of the diffeomorphism groups unless otherwise stated. They are {\it smooth} bundles.
An {\it $X$-bundle} means a (smooth) bundle whose fiber is (diffeomorphic) to a topological space (resp. smooth manifold) $X$.

A {\it linear} bundle is a bundle whose fiber is a unit sphere or a unit disk and whose structure group acts linearly on its fiber in a canonical way.
A {\it singular} point of a differentiable map $c$ is a point at which the rank of the differential of the map is smaller than both the dimensions of the manifolds of the domain and the target. The set $S(c)$ of all singular points is the {\it singular set} of the map. The {\it singular value set} of the map is the image of the singular set. The {\it regular value set} of the map is the complementary set of the singular value set in the manifold of the target. A {\it singular {\rm (}regular{\rm )} value} is a point in the singular (resp. regular) value set.

\subsection{Fold maps.}
A smooth map between an $m$-dimensional smooth manifold no boundary into an $n$-dimensional smooth manifold with no boundary is said to be a {\it fold} map if at each singular point $p$, the map is represented as
$$(x_1, \cdots, x_m) \mapsto (x_1,\cdots,x_{n-1},\sum_{k=n}^{m-i}{x_k}^2-\sum_{k=m-i+1}^{m}{x_k}^2)$$
for some suitable coordinates and an integer $0 \leq i(p) \leq \frac{m-n+1}{2}$.
For a fold map, the following three are fundamental properties.
\begin{itemize}
\item For any singular point $p$, $i(p)$ is unique.
\item The set consisting of all singular points of a fixed index of the map is a smooth closed submanifold of the $m$-dimensional manifold of the domain whose dimension is $n-1$ and which has no boundary.
\item The restriction map to the singular set is a smooth immersion.
\end{itemize}
{$i(p)$ is said to be the {\it index} of $p${\rm ). A Morse function on a manifold with no boundary is a fold map.
For fundamental study of Morse functions and fold maps, we have various books and articles.
For the theory of Morse functions or Morse theory, \cite{milnor2} is well-known and \cite{milnor3} emphasizes algebraic topological and differential topological viewpoints. \cite{golubitskyguillemin} is on the singularity theory of differentiable maps and singularities appearing in Morse functions and fold maps are explained (where fold maps are not explicitly defined). \cite{thom} and \cite{whitney} are pioneering papers on so-called {\it generic} smooth maps from manifolds whose dimensions are greater than or equal to $2$ into the plane and fold maps are regarded as such maps without so called {\it cusp} points, which form finite subsets of the manifolds.
\subsection{Special generic maps}
A {\it special generic} map is a fold map such that the index of each singular point is $0$. A Morse function on a closed manifold with just two singular points, characterizing a sphere topologically (except $4$-dimensional cases) as the Reeb's theorem \cite{reeb} states, and the canonical projection of a unit sphere are simplest special generic maps. It is an elementary exercise on smooth manifolds, smooth functions and maps and Morse theory to check that the canonical projection is special generic.
 It is an interesting fact that special generic maps restrict the topologies and the differentiable structures of the manifolds admitting them strongly in considerable cases. For example, exotic spheres of dimension $m>3$ do not admit special generic maps into
${\mathbb{R}}^{m-3}$, ${\mathbb{R}}^{m-2}$ and ${\mathbb{R}}^{m-1}$. For integers $m>n \geq 1$, on an $m$-dimensional manifold represented as a connected sum of manifolds represented as products of two standard spheres at least one of whose dimension is smaller than $n$ and positive, we can easily construct a special generic map into ${\mathbb{R}}^n$. However, for example, it is known that $4$-dimensional manifolds homeomorphic to these manifolds and not diffeomorphic to them exist and that they do not admit special generic maps into ${\mathbb{R}}^3$. For these studies, see \cite{nishioka}, \cite{saeki}, \cite{saeki2}, \cite{saekisakuma}, \cite{saekisakuma2} and \cite{wrazidlo} for example.

\subsection{Explicit fold maps on $7$-dimensional manifolds and application to algebraic and differential topology of manifolds.}
We can see that every $m$-dimensional homotopy sphere admits a fold map into ${\mathbb{R}}^n$ satisfying $m \geq n \geq 1$ by the theory on existence of fold maps via homotopy principle by \cite{eliashberg} and \cite{eliashberg2}. The author previously constructed explicit fold maps into ${\mathbb{R}}^4$ on every $7$-dimensional homotopy sphere. The author has also discovered that the topologies of the singular value sets and the differentiable structures are closely related (Corollary \ref{cor:1}).

Motivated by these studies, in the present paper, we study explicit smooth maps, especially, fold maps into ${\mathbb{R}}^4$ on $7$-dimensional manifolds, which produce interesting problems on algebraic topology and differential topology of manifolds. For example, we will see new explicit fold maps into ${\mathbb{R}}^4$ on $7$-dimensional closed and simply-connected manifolds of a class about which comprehensive algebraic topological studies have been demonstrated by Kreck (\cite{kreck}) for example, and explore meanings in algebraic topological or differential topological theory of manifolds.

\subsection{The content of the paper.}
The organization of the paper is as the following.
In the next section, we review fundamental properties and known results on special generic maps and some other classes of fold maps and manifolds including $7$-dimensional ones admitting them: Corollary \ref{cor:1} is one of most meaningful results with respect to the main theme of the present paper implying that types of fold maps into ${\mathbb{R}}^4$ restrict the differentiable structures of the $7$-dimensional homotopy spheres. The author has obtained this before and presented in \cite{kitazawa2} for example. The last section is devoted to new algebraic topological and differential topological results on fold maps on $7$-dimensional closed manifolds and the presentation of new explicit fold maps on these manifolds.

Theorems \ref{thm:5} (\ref{thm:4})--\ref{thm:8} are main theorems of the present paper.
Theorem \ref{thm:4} is on characteristic classes ({\it 1st Pontryagin classes}) and Theorems \ref{thm:5} and \ref{thm:6} present a family of new explicit fold maps with information of $7$-dimensional closed and simply-connected manifolds admitting these maps. We also show Theorem \ref{thm:7}.

We add comments on Theorems \ref{thm:5} and \ref{thm:6} reviewing \cite{kitazawa6}. Theorems \ref{thm:5} and \ref{thm:6} are regarded as results obtained by explicitizing abstract results of \cite{kitazawa6} to see $7$-dimensional manifolds via fold maps into ${\mathbb{R}}^4$. \cite{kitazawa6} is on homology groups, cohomology rings and other invariants for {\it Reeb spaces} of fold maps, which are defined as the spaces of all connected components of preimages of maps and defined as the quotient spaces of the manifolds.

Reeb spaces are in suitable cases inherit invariants for the manifolds such as homology groups and cohomology rings considerably.

In the paper, essentially, the Reeb spaces of fold maps such that preimages of regular values are disjoint unions of standard spheres and that the restrictions to the singular sets are embeddings are systematically studied. The fact that for sufficiently high dimensional manifolds of the domains, Reeb spaces inherit invariants before well is presented. Some new facts on sufficiently high dimensional manifolds admitting these maps are also explained.
However, most of the content is on the Reeb spaces. We do not need to study the paper precisely to study the present paper.

On the other hand, the present paper is devoted mainly to $7$-dimensional manifolds admitting maps satisfying the properties explained here and the spaces of the target spaces are $4$-dimensional: the dimensions of the manifolds of the domains are not sufficiently large in most cases. In proving the theorems, we prove key propositions such as Proposition \ref{prop:6}. Discussions and propositions including this proposition are new: we study not only homology groups and cohomology rings of Reeb spaces of maps but also those of the manifolds admitting these maps precisely.

We close the present paper by presenting Theorem \ref{thm:8} and Remark \ref{rem:2}. Theorem \ref{thm:8} generalizes manifolds, submanifolds and homology and cohomology groups in Theorem \ref{thm:5}.

% \thanks{The author would like to express his gratitude to Mitsutaka Murayama and Osamu Saeki for
% helpful comments and
% constant encouragement.}

\section{Special generic maps and round fold maps.}
\label{sec:2}
Throughout this paper, $M$ is an $m$-dimensional closed and connected manifold, $n<m$ is a positive integer and $f:M \rightarrow {\mathbb{R}}^n$ is a smooth map unless otherwise stated.
\subsection{Fundamental differential topological properties of special generic maps and manifolds admitting them.}
We introduce fundamental properties and propositions on special generic maps. See articles referred in section \ref{sec:1} and as related other articles, see also \cite{kitazawa}, \cite{kitazawa2}, \cite{kitazawa3}, \cite{kitazawa4} ,\cite{kitazawa5}, \cite{kitazawa6} and \cite{saekisuzuoka} for example.
\begin{Prop}
\label{prop:1}
Let $m>n \geq 1$ be integers.
\begin{enumerate}
\item For an $m$-dimensional closed and connected manifold $M$, if it admits a special generic map into ${\mathbb{R}}^n$, then it is regarded as the composition of a suitable smooth map onto an $n$-dimensional compact and connected manifold satisfying the following properties with a smooth immersion into ${\mathbb{R}}^n$.
\begin{enumerate}
\item The boundary of the compact manifold coincides with the singular value set.
\item On the preimage of the interior of this map, the map onto this interior gives a smooth $S^{m-n}$-bundle.
\item On the preimage of a small collar neighborhood of the manifold of the target, the composition of the map onto this neighborhood with the canonical projection onto the boundary gives a linear $D^{m-n+1}$-bundle.
\end{enumerate}
\item For an $n$-dimensional compact and connected manifold we can smoothly immerse into ${\mathbb{R}}^n$, there exists an $m$-dimensional closed and connected manifold admitting a special generic map into ${\mathbb{R}}^n$ such that the $n$-dimensional manifold explained in the previous statement is diffeomorphic to the given $n$-dimensional manifold and that the special generic map is represented as the composition of the smooth surjection as in the previous statement with a smooth immersion of the $n$-dimensional manifold into ${\mathbb{R}}^n$.

\end{enumerate}
\end{Prop}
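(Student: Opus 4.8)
The natural tool for part (1) is the \emph{Reeb space} of $f$. The plan is as follows. Define $W_f := M/{\sim_f}$, where $x \sim_f x'$ precisely when $x$ and $x'$ lie in the same connected component of a single fibre $f^{-1}(y)$, and let $q_f \colon M \to W_f$ be the quotient map; since $f$ is constant on each $\sim_f$-class it factors as $f = \bar f \circ q_f$ for a continuous $\bar f \colon W_f \to {\mathbb{R}}^n$, and $W_f$ is compact and connected because $M$ is. First I would equip $W_f$ with a smooth structure. Away from $S(f)$ the map $f$ is a submersion, and a standard argument — Ehresmann's theorem over the interior of the Reeb space, matched against the local model near $S(f)$ — shows that every component of every regular fibre is diffeomorphic to the standard sphere $S^{m-n}$; this produces interior charts on which $q_f$ is a product projection $U \times S^{m-n} \to U$ with $\bar f|_U$ a local diffeomorphism. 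Near a point of $S(f)$ the normal form
\[
(x_1,\dots,x_m) \longmapsto \Bigl(x_1,\dots,x_{n-1},\ \sum_{k=n}^{m} x_k^2\Bigr)
\]
shows that the fibre through a singular point is that point alone, that nearby fibres contribute spheres $\{\sum_{k\ge n} x_k^2 = t\}$ with $t>0$, and that $f|_{S(f)}$ is a codimension-$1$ immersion; this produces boundary charts (half-spaces whose boundary is the image of $S(f)$). Checking smooth compatibility of these charts makes $W_f$ a compact connected smooth $n$-manifold with $\partial W_f = q_f(S(f))$, and since distinct singular points lie in distinct fibre components, $q_f|_{S(f)}\colon S(f)\to\partial W_f$ is a bijective immersion of closed $(n-1)$-manifolds, hence a diffeomorphism. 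This is (a).

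For (b), $q_f$ restricted to $q_f^{-1}(\mathrm{Int}\,W_f)\to\mathrm{Int}\,W_f$ is a proper submersion with fibre $S^{m-n}$, hence an $S^{m-n}$-bundle by Ehresmann's theorem. For (c), I would take a collar $N\cong\partial W_f\times[0,1)$ small enough that on $q_f^{-1}(N)$ the map $f$ is given in every chart by the normal form above; then the preimage of each $\{\theta\}\times[0,1)$ is a disc $D^{m-n+1}$ (the family of fibre spheres together with their limiting cone point), and the remaining task is to show the resulting $D^{m-n+1}$-bundle $q_f^{-1}(N)\to\partial W_f$ can be trivialised with transition functions in $O(m-n+1)$ acting standardly. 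This follows by identifying $q_f^{-1}(N)$ with a tubular neighbourhood of $S(f)$ in $M$, on which $f$ is fibrewise a squared norm, so that the bundle is (up to isomorphism) the normal $(m-n+1)$-disc bundle of $S(f)$. Finally $\bar f$ is an immersion: it is a local diffeomorphism on $\mathrm{Int}\,W_f$, and near $\partial W_f$ the normal form exhibits it as the restriction of a diffeomorphism of an open half-space, so $d\bar f$ is everywhere injective; moreover $\bar f = f$ on $S(f)\cong\partial W_f$, so the singular value set is exactly $\partial W_f$. This completes (1).

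For part (2) the plan is an explicit construction. Given a compact connected $n$-manifold $W$ with an immersion $g\colon W\to{\mathbb{R}}^n$ (so necessarily $\partial W\neq\emptyset$), fix a collar $\partial W\times[0,1)\hookrightarrow W$, put $W_0 := W\setminus(\partial W\times[0,\tfrac12))$ (a copy of $W$ with $\partial W_0 = \partial W\times\{\tfrac12\}\cong\partial W$), and set
\[
M := \bigl(W_0\times S^{m-n}\bigr)\ \cup\ \bigl(\partial W\times D^{m-n+1}\bigr),
\]
glued along $\partial W_0\times S^{m-n}\cong\partial W\times S^{m-n}=\partial(\partial W\times D^{m-n+1})$; this is a closed connected $m$-manifold. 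Define $q\colon M\to W$ to be the projection $W_0\times S^{m-n}\to W_0$ on the first piece and $(\theta,x)\mapsto(\theta,\tfrac12|x|^2)\in\partial W\times[0,\tfrac12]$ on the second; the two formulas agree on the gluing locus, and $q$ exhibits $W$ with the three properties of part (1). Then $f := g\circ q\colon M\to{\mathbb{R}}^n$ is smooth, is a submersion on $M\setminus(\partial W\times\{0\})$, and near $\partial W\times\{0\}$, in coordinates adapted to $g$, is exactly the index-$0$ fold normal form displayed above. Hence $f$ is a special generic map whose Reeb space is $W$ and whose factorisation is $g\circ q$, as required.

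I expect the main obstacle to be the linearity assertion in (c): upgrading the tautological $D^{m-n+1}$-bundle near $\partial W_f$ to an honest linear bundle means controlling how the fold normal form varies along $S(f)$, i.e. understanding a tubular neighbourhood of the singular set compatibly with $f$. Verifying smooth compatibility of the interior and boundary charts on $W_f$ is the secondary delicate point; once these are in place, part (2) is a routine assembly of standard pieces.
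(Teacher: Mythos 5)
The paper states Proposition \ref{prop:1} without proof, citing the literature (Saeki et al.), and your argument is exactly the standard one from those references: Stein factorization through the Reeb space $W_f=M/{\sim}$, interior charts and the $S^{m-n}$-bundle structure via Ehresmann's theorem, boundary charts and the linear $D^{m-n+1}$-bundle via the index-$0$ fold normal form and the normal bundle of $S(f)$, and for part (2) the explicit gluing of $W_0\times S^{m-n}$ with $\partial W\times D^{m-n+1}$. The proposal is correct and takes essentially the same approach the paper relies on.
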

In this proposition, for a special generic map $f:M \rightarrow {\mathbb{R}}^n$ on an $m$-dimensional closed and connected manifold, the $n$-dimensional compact manifold we can immerse into ${\mathbb{R}}^n$ is denoted by $W_f$ and the smooth surjection by $q_f:M \rightarrow W_f$.
\begin{Ex}
\label{ex:1}
Let $m>n \geq 2$ and $l>0$ be integers. An $m$-dimensional closed and connected manifold $M$ represented as a connected sum of manifolds of a family $\{S^{l_j} \times S^{m-l_j}\}_{j=1}^{l}$ satisfying $1 \leq l_j \leq n-1$ admits a special generic map
$f:M \rightarrow {\mathbb{R}}^n$ with the following properties where the connected sum is considered in the smooth category.
\begin{enumerate}
\item $W_f$ is represented as a boundary connected sum of manifolds of a family $\{S^{l_j} \times D^{n-l_j}\}$ where the boundary connected sum is considered in the smooth category.
\item $f {\mid}_{S(f)}$ is an embedding.
\item The $S^{m-n}$-bundle and the $D^{m-n+1}$-bundle in Proposition \ref{prop:1} are trivial bundles.
\end{enumerate}
More precisely, in the case $n=2$, a closed manifold is represented as a connected sum of total spaces of smooth $S^{m-1}$-bundles over $S^1$ if and only if it admits a special generic map into ${\mathbb{R}}^2$.
This holds in the case $n=3$ under the constraint that the fundamental group of the manifold is free. The connected sums are considered in the smooth category as before. See \cite{saeki}, \cite{saeki2} and \cite{saekisakuma} for example.
\end{Ex}
\begin{Prop}
\label{prop:2}
For a special generic map $f:M \rightarrow {\mathbb{R}}^n$ on an $m$-dimensional closed and connected manifold $M$ satisfying $m>n \geq 1$, $M$ is bounded by a compact and connected PL manifold $W$ which collapses to $W_f$.
Especially, if $m-n=1,2,3$, then we can take $W$ as a smooth manifold.
\end{Prop}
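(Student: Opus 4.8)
The plan is to produce $W$ as the total space of a $D^{m-n+1}$-bundle over $W_f$, using the description of $q_f\colon M\to W_f$ given in Proposition~\ref{prop:1}. Write $W_f=W_f''\cup C$, where $W_f''$ is a compact codimension-$0$ submanifold diffeomorphic to $W_f$ and contained in $\mathrm{Int}\,W_f$, and $C\cong\partial W_f\times[0,1]$ is the closure of the complementary collar, chosen inside the collar neighbourhood of Proposition~\ref{prop:1}~(1)(c). Over $W_f''$, Proposition~\ref{prop:1}~(1)(b) provides a smooth $S^{m-n}$-bundle $\beta$ with total space $q_f^{-1}(W_f'')$; over $C$, Proposition~\ref{prop:1}~(1)(c) identifies $q_f^{-1}(C)$ with the total space $E(\xi)$ of a linear $D^{m-n+1}$-bundle $\xi\to\partial W_f$, with $q_f$ given fiberwise by $v\mapsto(\text{base point},|v|)$. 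In particular $\beta$ restricted to a neighbourhood of $\partial W_f''$ agrees with the pullback of the linear sphere bundle $S(\xi)$, and $M$ is recovered as $q_f^{-1}(W_f'')\cup_{S(\xi)}E(\xi)$.

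The key step is to build a $D^{m-n+1}$-bundle $\eta\to W_f$ whose boundary sphere bundle restricts over $W_f''$ to $\beta$ and whose restriction to $C$ is the pullback of $\xi$; this amounts to extending the $S^{m-n}$-bundle $\beta$ to the boundary of a $D^{m-n+1}$-bundle, compatibly with $\xi$ near $\partial W_f''$. Granting this, set $W:=E(\eta)$. Then $W$ is compact and connected, and $\partial E(\eta)=S(\eta)\cup_{S(\eta)|_{\partial W_f}}E(\eta)|_{\partial W_f}$: the sphere bundle $S(\eta)$ is $\beta$ over $W_f''$ and $S(\xi)\times[0,1]$ over $C$, while the horizontal boundary $E(\eta)|_{\partial W_f}$ is $E(\xi)$, and reassembling gives $q_f^{-1}(W_f'')\cup_{S(\xi)}E(\xi)\cong M$. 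Moreover $E(\eta)$ collapses to its zero section $W_f$, since $D^{m-n+1}$ collapses to a point and one performs the collapse cell by cell over a triangulation of $W_f$ trivialising $\eta$.

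The whole difficulty is concentrated in this bundle-extension step, and it is where the PL--smooth dichotomy appears. In the PL category it always succeeds: the fiberwise cone of $\beta$ --- equivalently its fiberwise mapping cylinder over $W_f''$ --- is a PL $D^{m-n+1}$-bundle with boundary sphere bundle $\beta$, the coning being admissible because $\mathrm{PL}(S^{m-n})$ embeds in $\mathrm{PL}(D^{m-n+1})$; equivalently, $\mathrm{PL}(D^{m-n+1}\ \text{rel}\ \partial)$ is contractible by the Alexander trick, so $B\mathrm{PL}(D^{m-n+1})\to B\mathrm{PL}(S^{m-n})$ is a homotopy equivalence. Compatibility with $\xi$ over $C$ is arranged because there $\beta=S(\xi)$ is already the boundary of a (linear, hence in particular PL) disk bundle. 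When $m-n=1,2,3$ one can remain in the smooth category: $\mathrm{Diff}(S^{m-n})$ is homotopy equivalent to $O(m-n+1)$ --- by Smale for $m-n=1,2$ and by Hatcher's proof of the Smale conjecture for $m-n=3$ --- so $\beta$ is smoothly a linear sphere bundle $S(\eta'')$, the linear disk bundle $\eta''$ agrees with $\xi$ near $\partial W_f''$ because a linear disk bundle is determined by its sphere bundle, and $W=E(\eta)$ is then a smooth manifold. (The smooth conclusion is not asserted for $m-n\ge4$ precisely because $\mathrm{Diff}(S^{m-n})\simeq O(m-n+1)$ is unknown there.) The remaining verifications --- making the bundle data agree on the overlap, and checking that the gluings recover the PL, resp.\ smooth, structure of $M$ --- are routine.
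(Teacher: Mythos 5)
Your argument is correct and is essentially the standard one that the paper relies on by citation (Saeki's construction of $W$ as the total space of a $D^{m-n+1}$-bundle over $W_f$ obtained by fiberwise coning the sphere bundle, using the Alexander trick in the PL category and $\mathrm{Diff}(S^{k})\simeq O(k+1)$ for $k\leq 3$ via Smale and Hatcher in the smooth case); the paper itself states Proposition~\ref{prop:2} without proof and refers to the literature for exactly this argument. No gaps.
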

\begin{Prop}
\label{prop:3}
For a special generic map $f:M \rightarrow {\mathbb{R}}^n$ on an $m$-dimensional closed and connected manifold $M$ satisfying $m>n \geq 1$, the surjection $q_f$ induces isomorphisms between the homology groups, cohomology rings and homotopy groups of degree $j \leq m-n$. Moreover, in the situation of Proposition \ref{prop:2}, $q_f$ is represented as the composition of the inclusion $i:M \rightarrow W$ with a natural map to $W_f$ giving a collapsing.
\end{Prop}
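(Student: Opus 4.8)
The plan is to exploit the fibration-type structure of $q_f$ supplied by Proposition \ref{prop:1} together with the collapsing description coming from Proposition \ref{prop:2}. Recall from Proposition \ref{prop:1} that $W_f$ decomposes as the union of an interior part, over which $q_f$ is an $S^{m-n}$-bundle, and a collar neighborhood of $\partial W_f = f(S(f))$, over which (after composing with the projection to the boundary) $q_f$ is a linear $D^{m-n+1}$-bundle; in particular, over the collar the fiber of $q_f$ is the disk $D^{m-n+1}$, which is contractible, while over interior points the fiber is $S^{m-n}$. So $q_f$ is, up to homotopy, a map whose fibers are $(m-n)$-connected below degree $m-n$ and only start to differ from a point in degree $m-n$.

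First I would set up the Leray–Serre spectral sequence (or, more elementarily, the long exact sequences of the relevant pairs) for the two pieces and for the map $q_f$ itself. Over the collar part the map is a homotopy equivalence onto its image since the $D^{m-n+1}$-fibers are contractible; over the interior the $S^{m-n}$-bundle has fiber whose reduced homology is concentrated in degree $m-n$. Hence for the Serre spectral sequence $E^2_{p,q} = H_p(W_f; \mathcal{H}_q(\text{fiber}))$ one has $E^2_{p,q} = 0$ for $0 < q < m-n$, and therefore the edge homomorphism $H_j(M) \to H_j(W_f)$ is an isomorphism for $j \le m-n-1$ and surjective for $j = m-n$; a short additional argument at $q = m-n$, using that $W_f$ is a compact $n$-manifold (so has no homology above degree $n$, and $n \le m-1$ forces the possibly-obstructing differentials to vanish in the range we care about), upgrades this to an isomorphism in degree $j = m-n$ as well. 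The cohomology statement, including the ring structure, follows by the same spectral sequence run cohomologically, with the multiplicative structure of the Serre spectral sequence giving that the edge map is a ring isomorphism through degree $m-n$; the homotopy group statement follows from the homotopy exact sequence of $q_f$ once one knows the fibers are $(m-n-1)$-connected, which is exactly the statement that $S^{m-n}$ and $D^{m-n+1}$ are $(m-n-1)$-connected.

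For the last sentence, in the situation of Proposition \ref{prop:2} we have a compact manifold $W$ with $\partial W = M$ that collapses to $W_f$. The collapse is a simple-homotopy equivalence $r \colon W \to W_f$, and I would check that the composite $M = \partial W \hookrightarrow W \xrightarrow{r} W_f$ is homotopic to $q_f$: both maps restrict compatibly over the collar (where everything is a product $D^{m-n+1}$-bundle) and over the interior (where both are the $S^{m-n}$-bundle projection), so they agree up to the obvious homotopy, and since these pieces glue one gets the global homotopy. This identification is essentially formal once the structural Propositions \ref{prop:1} and \ref{prop:2} are in hand.

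The main obstacle I anticipate is the borderline degree $j = m-n$: one must rule out a nonzero differential hitting or emanating from $E^2_{p, m-n}$ in the spectral sequence, and verify that the cokernel of $H_{m-n}(M) \to H_{m-n}(W_f)$ vanishes rather than merely being small. This is where the precise geometry — that $W_f$ is an $n$-manifold with boundary equal to the singular value set, and that the $D^{m-n+1}$-bundle near the boundary is \emph{linear} so that its sphere bundle of the boundary fits together consistently with the interior $S^{m-n}$-bundle — has to be used, rather than just the abstract fibration picture. Once that degree is handled, everything else is a routine unwinding of exact sequences.
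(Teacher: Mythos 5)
The paper itself gives no proof of Proposition \ref{prop:3}: Propositions \ref{prop:1}--\ref{prop:5} are quoted from the literature on special generic maps (Saeki, Saeki--Suzuoka, Nishioka), so there is no in-paper argument to compare against. Judged on its own merits, your outline assembles the right ingredients --- the decomposition of $W_f$ into interior plus collar, the $S^{m-n}$-bundle and $D^{m-n+1}$-bundle structures from Proposition \ref{prop:1}, and the identification of $q_f$ with the composite $M=\partial W\hookrightarrow W\rightarrow W_f$ --- and your treatment of the final sentence of the statement is fine. But the critical step is not carried out, and the mechanism you offer for it in the main text is not the correct one.

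The gap is at degree $j=m-n$. For an $S^{m-n}$-bundle $E\rightarrow B$, the edge homomorphism $H_{m-n}(E)\rightarrow H_{m-n}(B)$ is surjective with kernel generated by the image of the fundamental class of the fiber (the surviving term $E^{\infty}_{0,m-n}$); this kernel does not vanish for dimension reasons --- for a product bundle it is all of $\mathbb{Z}$ --- so the parenthetical claim that $W_f$ having no homology above degree $n$ forces the obstructing terms to die is wrong. What actually kills the fiber class is the collar: over a normal arc at $\partial W_f$ the map is a concentric family of spheres inside a fiber $D^{m-n+1}$, so every regular fiber $S^{m-n}$ bounds an embedded disc in $M$ and is null-homotopic. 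One then needs a Mayer--Vietoris (and, for $\pi_1$ when $m-n=1$, van Kampen) diagram chase comparing the decompositions of $M$ and of $W_f$ to verify that this fiber class is the \emph{only} potential kernel element in degree $m-n$; you flag exactly this point as an ``anticipated obstacle'' but do not resolve it. A secondary issue: $q_f$ is not a fibration (its fibers over boundary points of $W_f$ are single points), so ``the Serre spectral sequence of $q_f$'' and ``the homotopy exact sequence of $q_f$'' do not literally exist; you must either run the argument piecewise on the two genuine bundles and glue, or pass to the bounding manifold $W$ of Proposition \ref{prop:2}, whose dual handle decomposition (handles of index at least $m-n+1$ attached to $M\times[0,1]$) shows the pair $(W,M)$ is $(m-n)$-connected --- which again yields only a surjection in degree $m-n$ until the bounding of the fiber class is invoked.
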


\begin{Prop}
\label{prop:4}
The following two are equivalent for a special generic map $f:M \rightarrow {\mathbb{R}}^n$ on an $m$-dimensional closed and connected manifold satisfying $m>n \geq 1$.
\begin{enumerate}
\item $M$ is a homotopy sphere.
\item The manifold $W_f$ is contractible.
\end{enumerate}
\end{Prop}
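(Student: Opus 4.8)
The plan is to prove the two implications separately, using Propositions \ref{prop:1}, \ref{prop:2} and \ref{prop:3} together with elementary algebraic topology (Poincar\'e--Lefschetz duality and the Hurewicz and Whitehead theorems). The direction $(2)\Rightarrow(1)$ is the more direct one, and I would treat it first.

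Assume $W_f$ is contractible. Proposition \ref{prop:2} provides a compact connected manifold $W$ (PL in general, smooth when $m-n\le 3$) with $\partial W=M$ that collapses onto $W_f$; since a collapse is a homotopy equivalence, $W$ is contractible, hence orientable. Poincar\'e--Lefschetz duality for $W$ then gives $H_k(W,M)\cong H^{m+1-k}(W)$, which is $0$ for $0\le k\le m$ and $\mathbb{Z}$ for $k=m+1$. Feeding this into the homology long exact sequence of the pair $(W,M)$ and using $H_k(W)=0$ for $k>0$ yields $H_k(M)=0$ for $0<k<m$ and $H_m(M)\cong\mathbb{Z}$, so $M$ is a homology $m$-sphere, in particular orientable. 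Since $m>n$ we have $m-n\ge 1$, so Proposition \ref{prop:3} gives $\pi_1(M)\cong\pi_1(W_f)=0$; the Hurewicz and Whitehead theorems then make $M$ a homotopy sphere.

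For $(1)\Rightarrow(2)$ I would argue on $W_f$ directly. Assume $M$ is a homotopy sphere. By Proposition \ref{prop:3} (again $m-n\ge 1$), $q_f$ induces an isomorphism of fundamental groups and of $H_j$ for $j\le m-n$, so $W_f$ is simply connected and $\widetilde H_j(W_f)=0$ for $1\le j\le m-n$. Moreover $W_f$ is a compact connected $n$-manifold whose boundary, the singular value set, is nonempty (a fold map of a closed manifold into $\mathbb{R}^n$ with $n<m$ cannot be a submersion), hence $W_f$ has the homotopy type of a CW complex of dimension at most $n-1$ and $\widetilde H_j(W_f)=0$ for $j\ge n$. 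When $m\ge 2n-1$ --- which covers the case $m=7$, $n=4$ that is the main concern of this paper --- these two ranges exhaust all positive degrees, so $W_f$ is simply connected with vanishing reduced homology, hence contractible. In the remaining generality one is left with a band of intermediate degrees $m-n<j<n$, and here I would exploit the fiber-bundle structure of Proposition \ref{prop:1}: the decomposition of $M$ into $q_f^{-1}(\mathrm{int}\,W_f)$, an $S^{m-n}$-bundle over $W_f$, and $q_f^{-1}$ of a collar of $\partial W_f$, a linear $D^{m-n+1}$-bundle over $\partial W_f$, feeds a Mayer--Vietoris and Gysin analysis expressing the cohomology of $M$ in terms of that of $W_f$, of $(W_f,\partial W_f)$ and of the Euler class of the bundle; the hypothesis that $M$ is a homology sphere then forces $\widetilde H_*(W_f)=0$ in these degrees as well. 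Once $W_f$ is simply connected with trivial reduced homology, Hurewicz and Whitehead give contractibility.

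The routine ingredients --- orientability of $W_f$ (automatic from simple connectivity, and in any case forced by the immersion of $W_f$ into $\mathbb{R}^n$ in Proposition \ref{prop:1}, which even makes $W_f$ parallelizable) and the bound on the homotopy dimension of a compact manifold with nonempty boundary --- are harmless. The single genuinely delicate point is the vanishing of $\widetilde H_j(W_f)$ in the middle range $m-n<j<n$ in the direction $(1)\Rightarrow(2)$: Proposition \ref{prop:3} and duality do not reach these degrees, so one must use the precise $S^{m-n}$- and $D^{m-n+1}$-bundle structure of Proposition \ref{prop:1} together with the homology-sphere hypothesis. This is exactly where the cited work on special generic maps is needed; in the dimensions $m=7$, $n=4$ relevant to this paper the difficulty does not occur and the two short arguments above are already complete.
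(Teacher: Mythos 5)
The paper does not actually prove Proposition \ref{prop:4}: it is listed among the ``fundamental properties and propositions'' quoted from the literature (Saeki's papers \cite{saeki}, \cite{saeki2}), so there is no in-paper argument to compare yours against, and I can only assess your proposal on its own terms. Your direction $(2)\Rightarrow(1)$ is complete and correct: contractibility of $W$ from Proposition \ref{prop:2}, Lefschetz duality for the pair $(W,M)$, and Proposition \ref{prop:3} for $\pi_1$ give a simply connected homology sphere, hence a homotopy sphere. Your direction $(1)\Rightarrow(2)$ is likewise complete whenever $m\ge 2n-1$, which covers the paper's case $(m,n)=(7,4)$, and you have correctly located the only real difficulty: the degrees $m-n<j<n$ that neither Proposition \ref{prop:3} nor the spine bound reaches.

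That middle range is, however, a genuine gap for the proposition as stated (it claims all $m>n\ge 1$), and your Mayer--Vietoris/Gysin sentence is a plan rather than a proof. To see that something substantive remains, take $m=5$, $n=4$: your two mechanisms give $\widetilde H_j(W_f)=0$ for $j\le 1$ and $j\ge 4$, and duality on $W$ kills $H_2(W_f)$, but for $H_3(W_f)$ one only gets $H_3(W_f)\cong H^3(W_f)$, i.e.\ that it is free; Lefschetz duality on $W_f$ itself identifies $H_3(W_f)$ with $\mathbb{Z}^{c-1}$ where $c$ is the number of components of $\partial W_f=f(S(f))$, so one still has to prove that the singular value set of a special generic map on a homotopy sphere is connected in this range. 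That is exactly the kind of statement for which the decomposition of $M$ into the $S^{m-n}$-bundle over the interior and the $D^{m-n+1}$-bundle over the collar must actually be run through Mayer--Vietoris against the homology-sphere hypothesis, as Saeki does. So: correct and self-contained for the dimensions this paper cares about, honestly flagged but not closed in general; if you intend the proof only for $m\ge 2n-1$ you should say so explicitly, and otherwise you need to carry out the bundle argument rather than gesture at it.
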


\begin{Thm}[\cite{saeki}, \cite{saeki2} and \cite{wrazidlo}]
\label{thm:1}
Let $a>b \ge1 $ be integers satisfying $a>3$ and $b=a-3,a-2,a-1$.
Every $a$-dimensional exotic sphere does not admit a special generic map into ${\mathbb{R}}^b$.
Furthermore, $7$-dimensional oriented smooth homotopy spheres of $14$ types of all the $28$ types do not admit special generic maps into ${\mathbb{R}}^3$.
\end{Thm}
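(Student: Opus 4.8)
\emph{Overall strategy.} The plan is to use Propositions \ref{prop:2} and \ref{prop:4} to convert the existence of a special generic map into a bounding problem, and then to feed the result into the classification of homotopy spheres. \emph{The first assertion.} Suppose an exotic homotopy $a$-sphere $\Sigma$ admits a special generic map $f$ into $\mathbb{R}^{b}$ with $b\in\{a-1,a-2,a-3\}$, so that $a-b\in\{1,2,3\}$. By Proposition \ref{prop:4} the Reeb manifold $W_{f}$ is contractible, and by Proposition \ref{prop:2} (whose last clause applies since $a-b\le 3$) the sphere $\Sigma$ bounds a \emph{smooth} compact $(a+1)$-manifold $W$ that collapses onto $W_{f}$; a collapse being a strong deformation retraction, $W$ is contractible. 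For $a\ge 5$, removing an open collared disc from $W$ yields an h-cobordism of dimension $a+1\ge 6$ between $\Sigma$ and $S^{a}$, whence $\Sigma\cong S^{a}$ by the h-cobordism theorem, contradicting exoticness. The case $a=4$ is exceptional, the h-cobordism theorem being unavailable; it is disposed of by direct low-dimensional arguments (for instance, when $b=1$ the map $f$ is a Morse function whose Reeb quotient is an interval, so $\Sigma$ is a twisted $4$-sphere and hence standard because $\pi_{0}(\mathrm{Diff}^{+}(S^{3}))=0$).

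\emph{The second assertion.} Here $a=7$ and $b=3$, so $a-b=4$ and Proposition \ref{prop:2} provides only a compact \emph{piecewise linear} contractible $W^{8}$ with $\partial W=\Sigma$. I would first note that the homotopy $7$-spheres admitting a special generic map into $\mathbb{R}^{3}$ form a subgroup $G\le\Theta_{7}\cong\mathbb{Z}/28\mathbb{Z}$: the standard sphere qualifies, reversing the orientation of a map inverts the class of its source, and a boundary connected sum of the $W_{f}$'s realises a connected sum of source manifolds. It then suffices to prove $G=2\,\mathbb{Z}/28\mathbb{Z}$, the unique subgroup of order $14$, since the $14$ classes outside $G$ are then precisely those excluded by the theorem. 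The inclusion $2\,\mathbb{Z}/28\mathbb{Z}\subseteq G$ I would obtain by an explicit construction, producing a special generic map into $\mathbb{R}^{3}$ on a generator of the order-$14$ subgroup from the bundle picture of Proposition \ref{prop:1} over $W_{f}=D^{3}$, in the spirit of Example \ref{ex:1}. For the reverse inclusion, I would exploit the structure of $\Sigma$ given by Proposition \ref{prop:1}: $\Sigma$ is assembled from $D^{3}\times S^{4}$ and $S^{2}\times D^{5}$ glued along a diffeomorphism of $S^{2}\times S^{4}$, and its class in $\Theta_{7}$ depends only on the twisting of that gluing. I would then build from this clutching data a \emph{smooth} spin $8$-manifold $V$ with $\partial V=\Sigma$ and compute the Eells--Kuiper invariant $\mu(\Sigma)$ from the signature and first Pontryagin number of $V$; the point is that this computation forces $\mu(\Sigma)$ into the index-$2$ subgroup of its range, i.e. $G\subseteq 2\,\mathbb{Z}/28\mathbb{Z}$.

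\emph{Where the difficulty lies.} The reductions through Propositions \ref{prop:2}, \ref{prop:4} and the h-cobordism theorem, and the explicit construction giving $2\,\mathbb{Z}/28\mathbb{Z}\subseteq G$, are comparatively routine. The essential difficulty is the last step of the second assertion: producing a suitable \emph{smooth} bounding manifold $V$ out of the clutching data (the piecewise linear $W^{8}$ coming for free need not be smoothable) and computing its index-theoretic invariants exactly, so as to pin down $G$ on the nose. This is the core of the work of Saeki and of Wrazidlo in \cite{saeki}, \cite{saeki2} and \cite{wrazidlo}.
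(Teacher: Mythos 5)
First, note that the paper does not actually prove Theorem \ref{thm:1}: it is quoted with attribution to \cite{saeki}, \cite{saeki2} and \cite{wrazidlo}, so there is no in-paper argument to compare yours against; I can only assess your outline on its own terms. For the first assertion your route is the standard one and is essentially correct: Proposition \ref{prop:4} gives that $W_f$ is contractible, Proposition \ref{prop:2} (with $a-b\le 3$) gives a smooth compact contractible $W^{a+1}$ bounded by $\Sigma$, and deleting an open disc yields an h-cobordism from $S^a$ to $\Sigma$, trivial for $a\ge 5$. The case $a=4$ does need the separate low-dimensional classification results of Saeki (and Cerf's $\pi_0\mathrm{Diff}^{+}(S^3)=0$ for $b=1$), which you correctly flag.

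For the second assertion there are two genuine problems. First, your reduction overshoots: to conclude that $14$ of the $28$ classes are excluded it suffices to show $G\subseteq 2\,\mathbb{Z}/28\mathbb{Z}$, whereas you propose to prove the equality $G=2\,\mathbb{Z}/28\mathbb{Z}$. The inclusion $2\,\mathbb{Z}/28\mathbb{Z}\subseteq G$ asserts that every even class --- thirteen exotic homotopy $7$-spheres --- admits a special generic map into ${\mathbb{R}}^3$; this is not established in the cited literature (which only rules out the odd classes), it is not needed for the theorem, and your ``explicit construction over $W_f=D^3$'' is not given and would itself be a substantial new result. You should drop that half entirely. Second, for the inclusion you do need, the entire mathematical content is concentrated in the step you defer: producing a smooth spin $8$-manifold $V$ with $\partial V=\Sigma$ from the clutching data over $\partial W_f=S^2$ (note also that the collar piece is a priori a possibly nontrivial linear $D^5$-bundle over $S^2$, not automatically $S^2\times D^5$; one must argue triviality from the $2$-connectivity of $\Sigma$ or treat both cases), and then pinning down the Eells--Kuiper invariant precisely enough to land in the index-$2$ subgroup. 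As written this step is a pointer to \cite{wrazidlo} rather than an argument, so the proposal is an accurate map of where the proof lives but not a proof. The subgroup structure of $G$ (closure under connected sum via boundary connected sum of the $W_f$'s, and under orientation reversal) is correctly attributed and is indeed due to Saeki.
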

\begin{Prop}[\cite{nishioka}]
\label{prop:5}
For a special generic map $f:M \rightarrow {\mathbb{R}}^n$ on an $m$-dimensional closed and connected manifold $M$ such that $H_1(M;\mathbb{Z})$ is zero satisfying $m>n \geq 3$, $H_1(W_f;\mathbb{Z})$ is zero and $H_{n-2}(W_f;\mathbb{Z})$ and $H_{n-1}(W_f;\mathbb{Z})$ are free.
\end{Prop}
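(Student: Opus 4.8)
The plan is to work directly with the $n$-manifold $W_f$. By Proposition \ref{prop:1} it is compact and connected and immerses into ${\mathbb{R}}^n$ with codimension $0$; such an immersion has invertible differential at every point, so $TW_f$ is the pullback of the trivial bundle $T{\mathbb{R}}^n$ and is therefore trivial. In particular $W_f$ is orientable, so Poincar\'{e}--Lefschetz duality with ${\mathbb{Z}}$ coefficients is available for the pair $(W_f,\partial W_f)$. Moreover $M$ is closed and $n\geq 1$, so $f$ cannot be a submersion; hence $S(f)\neq\emptyset$ and, by Proposition \ref{prop:1}(1)(a), $\partial W_f\neq\emptyset$.

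Next I would transport and then dualize the homological hypothesis. Since $m>n$ we have $m-n\geq 1$, so Proposition \ref{prop:3} gives $H_1(W_f;{\mathbb{Z}})\cong H_1(M;{\mathbb{Z}})=0$, which is the first assertion. For the remaining two, Lefschetz duality turns them into
\[
H_{n-1}(W_f;{\mathbb{Z}})\cong H^1(W_f,\partial W_f;{\mathbb{Z}}),\qquad H_{n-2}(W_f;{\mathbb{Z}})\cong H^2(W_f,\partial W_f;{\mathbb{Z}}).
\]
As $W_f$ is connected with nonempty boundary, $H_0(W_f,\partial W_f;{\mathbb{Z}})=0$, so the universal coefficient theorem makes $H^1(W_f,\partial W_f;{\mathbb{Z}})$ free and identifies the torsion subgroup of $H^2(W_f,\partial W_f;{\mathbb{Z}})$ with the torsion subgroup of $H_1(W_f,\partial W_f;{\mathbb{Z}})$. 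So everything reduces to showing that $H_1(W_f,\partial W_f;{\mathbb{Z}})$ is torsion-free.

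This last point is the only real step, and the one requiring a little care, since $\partial W_f$ need not be connected. I would read it off the exact sequence of the pair:
\[
H_1(W_f;{\mathbb{Z}})\longrightarrow H_1(W_f,\partial W_f;{\mathbb{Z}})\xrightarrow{\ \partial\ }H_0(\partial W_f;{\mathbb{Z}})\longrightarrow H_0(W_f;{\mathbb{Z}}).
\]
Because $H_1(W_f;{\mathbb{Z}})=0$, the connecting map $\partial$ is injective, so $H_1(W_f,\partial W_f;{\mathbb{Z}})$ embeds into $H_0(\partial W_f;{\mathbb{Z}})$, which is a finitely generated free abelian group (the closed manifold $\partial W_f$ has finitely many components); a subgroup of such a group is free, which completes the argument. (For $H_{n-1}$ alone one can bypass duality entirely, using the classical fact that a compact $n$-manifold with nonempty boundary has the homotopy type of a CW complex of dimension at most $n-1$, whose top homology group is automatically free.) Note that only $m>n$ and $H_1(M;{\mathbb{Z}})=0$ enter the argument; the hypothesis $n\geq 3$ merely places the conclusion in a range where it carries information.
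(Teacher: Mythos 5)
Your argument is correct and complete. Note, however, that the paper itself supplies no proof of Proposition \ref{prop:5}: it is stated as a quotation of Nishioka's result (\cite{nishioka}), so there is nothing in the text to compare your reasoning against line by line. What you have written is a self-contained derivation from the ingredients the paper does provide. The chain of reductions is sound: the codimension-$0$ immersion of $W_f$ into ${\mathbb{R}}^n$ from Proposition \ref{prop:1} makes $TW_f$ trivial, hence $W_f$ orientable, so Poincar\'e--Lefschetz duality is legitimate; Proposition \ref{prop:3} with $1\leq m-n$ gives $H_1(W_f;\mathbb{Z})=0$; duality plus the universal coefficient theorem (all groups are finitely generated since $W_f$ is compact) reduces both freeness claims to the torsion-freeness of $H_1(W_f,\partial W_f;\mathbb{Z})$; and the exact sequence of the pair, with $H_1(W_f;\mathbb{Z})=0$ and $\partial W_f\neq\emptyset$ (forced by compactness of $M$, since a submersion of a closed manifold onto a noncompact connected target is impossible), embeds that group into the free group $H_0(\partial W_f;\mathbb{Z})$. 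You are also right that the disconnectedness of $\partial W_f$ is the only delicate point and that your handling of it is what makes the argument clean, and your closing observation that $n\geq 3$ is not actually used (the cases $n=1,2$ being vacuous or trivial) is accurate. This is essentially the standard duality argument one would expect behind the cited result, so I see no gap.
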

%\begin{Prop}
%by a PL manifold simple homotopy equivalent to $W_f$.
%by a smooth manifold simple homotopy equivalent to.
%\end{Prop}

\subsection{Round fold maps.}
We review {\it round} fold maps, introduced by the author first in \cite{kitazawa}, \cite{kitazawa2} and \cite{kitazawa3}. $\mathbb{N}$ denotes the set of all positive integers.
\begin{Def}
\label{def:1}
For a fold map $f:M \rightarrow {\mathbb{R}}^n$ satisfying $m \geq n \geq 2$, $f$ is said to be a {\it round} fold map if the restriction to the singular set is an embedding and the image is concentric spheres: more precisely, for
a suitable diffeomorphism $\phi$ and an integer $l>0$ $(\phi \circ f)(S(f))=\{x \in {\mathbb{R}}^n \mid 1 \leq ||x|| \in \mathbb{N} \leq l\}$ holds.
\end{Def}
\begin{Ex}
\label{ex:2}
The canonical projection of the unit sphere of dimension $m \geq n \geq 2$ into ${\mathbb{R}}^n$ and some special generic maps on homotopy spheres are round fold maps.
\end{Ex}
\begin{Def}
\label{def:2}
For each connected component $C$ of the singular value set of a round fold map $f:M \rightarrow {\mathbb{R}}^n$, there exists a small closed tubular neighborhood and if we consider the restriction of $f$ to the preimage of the closed tubular neighborhood and composition of this with a canonical projection to $C$, then we have a smooth bundle over $C$. If this bundle is trivial for each $C$, then $f$ is said to be {\it componentwisely trivial}.
\end{Def}
Maps in Example \ref{ex:2} are componentwisely trivial.
\begin{Thm}[\cite{kitazawa}, \cite{kitazawa2} and \cite{kitazawa4}]
\label{thm:2}
Let $m>n \geq 2$ be integers. We can construct a componentwisely trivial round fold map on a manifold represented as a connected sum of total spaces of $S^{m-n}$-bundles over $S^n$ into ${\mathbb{R}}^n$ satisfying the following properties where the connected sum is taken in the smooth category.
\begin{enumerate}
\item Preimages of regular values are disjoint unions of standard spheres.
\item In ${\mathbb{R}}^n$, the number of connected components of a preimage increases as we go into a point in the just one connected component diffeomorphic to an open ball of the regular value set starting from the unbounded connected component of the regular value set. More rigorously, in Definition \ref{def:1}, for each point in $p$ satisfying $1<||p|| \neq \mathbb{N}<l+1$, the number of connected components of each point $q \in {\mathbb{R}}^n$ satisfying $||q||=||p||-1$ is greater than that of $p$ by $1$.
\end{enumerate}
\end{Thm}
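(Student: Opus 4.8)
The plan is to construct the map explicitly, by prescribing its inverse image over a nested family of regions of $\mathbb{R}^n$ and then gluing those pieces together; the round structure of the target is built in from the outset. Throughout, $l\ge 0$ denotes the number of connected summands, the case $l=0$ being $S^m$ together with the canonical projection of Example~\ref{ex:2}.

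First I would fix radii $0<\rho_1<\rho_2<\cdots<\rho_{l+1}$ and declare the singular value set to be the concentric spheres $S_j=\{x\in\mathbb{R}^n:|x|=\rho_j\}$, which cut the regular value set into the unbounded exterior, $l$ open annuli, and one innermost open ball. I arrange the inverse image of the exterior to be empty, the inverse image of a regular value in the $j$-th annulus counted from outside to be a disjoint union of $l+1-j$ copies of $S^{m-n}$ (so that there are $l+1$ of them over the innermost ball and condition (2) of the statement holds automatically), the fold over the outermost sphere $S_{l+1}$ to have index $0$, and the folds over $S_1,\dots,S_l$ to have index $1$. The reason for using only indices $0$ and $1$ is that the surgery a fold performs on a fibre is then either the adjunction of a disjoint $S^{m-n}$ or the splitting of one $S^{m-n}$ into two, each a hemisphere of the old sphere capped by a fresh disc; both keep fibres diffeomorphic to disjoint unions of standard spheres, which is condition (1).

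Next I would write the explicit local models. Over a closed tubular neighbourhood $S^{n-1}\times[-1,1]$ of each $S_j$ I take the inverse image to be a product $S^{n-1}\times Y_j$, where $Y_j\to[-1,1]$ is a fold map of an $(m-n+1)$-dimensional manifold with a single critical point: a $0$-handle (a cap $D^{m-n+1}$) for $j=l+1$, and an elementary $1$-handle cobordism between $S^{m-n}\sqcup S^{m-n}$ and $S^{m-n}$ for $1\le j\le l$; over each closed annulus, and over the central closed ball, I take the product of that region with a disjoint union of the appropriate number of copies of $S^{m-n}$. Every piece is thus a product over the relevant part of $\mathbb{R}^n$, so the resulting map is automatically round and componentwise trivial; in particular the only freedom left is the choice of gluing diffeomorphisms of copies of $S^{n-1}\times S^{m-n}$ along the interface spheres $S^{n-1}\times\{\pm1\}$, which lie on the boundaries of the tubular neighbourhoods and so do not affect componentwise triviality.

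The final and hardest step is to choose these gluings so that the assembled closed manifold $M$ is the prescribed connected sum, and to verify that every linear $S^{m-n}$-bundle over $S^n$ is attainable this way. Cutting along $S_1$ exhibits $M$ as the union of the $l+1$ copies of $D^n\times S^{m-n}$ lying over the central ball with the inverse image of the exterior region, and iterating the capping-and-splitting description --- while absorbing a summand $S^{m-n}\times S^n$ whenever one appears --- identifies $M$ with a connected sum of total spaces of linear $S^{m-n}$-bundles over $S^n$, one summand per index-$1$ sphere $S_j$, whose clutching function $S^{n-1}\to O(m-n+1)$ is exactly the rotational part of the gluing chosen across $S_j$. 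The main obstacle is this bookkeeping: one must track the framings of the index-$1$ handles over the parameter sphere $S^{n-1}$ and check that, once the central ball is glued in, no relation beyond being a linear $S^{m-n}$-bundle over $S^n$ survives, so that an arbitrary family of such bundles is realised. The remaining points --- smoothness of $f$ across the interfaces between product pieces and the verification that $f$ is a genuine fold map meeting (1) and (2) --- are routine from the explicit models.
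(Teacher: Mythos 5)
The paper itself contains no proof of Theorem \ref{thm:2} (it is quoted from \cite{kitazawa}, \cite{kitazawa2} and \cite{kitazawa4}), but your construction --- concentric singular spheres with one definite fold outermost and $l$ indefinite folds inside, empty preimage of the exterior, fibrewise product local models $S^{n-1}\times Y_j$ over each singular sphere and trivial sphere-bundles over each annulus and the central ball, with the bundle data encoded in the gluings along copies of $S^{n-1}\times S^{m-n}$ --- is exactly the construction of those references, and it is essentially correct. Two caveats: your fibre count ``$l+1-j$ copies over the $j$-th annulus counted from outside'' runs the wrong way (it should be $j$ copies, or else the annuli should be counted from inside; your parenthetical shows you intend the monotone count required by condition (2)), and the one genuinely nontrivial verification --- that suitable choices of the clutching data across the index-$1$ spheres realize an arbitrary prescribed connected sum of total spaces of linear $S^{m-n}$-bundles over $S^n$ --- is correctly identified as the main obstacle but only sketched, and it is precisely where the cited papers do their real work.
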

\begin{Rem}
We can define the notion of a {\it round} fold map for $n=1$ and we have a theorem similar to Theorem \ref{thm:2}. We do not need in the present study. Consult \cite{kitazawa4} for example.
\end{Rem}
Thanks to the theorem with \cite{eellskuiper} and \cite{milnor} for example, we obtained this before.
\begin{Cor}[\cite{kitazawa2} and so on.]
\label{cor:1}
Every $7$-dimensional homotopy sphere admits a round fold map as in Theorem \ref{thm:2} into ${\mathbb{R}}^4$. Moreover, the number of connected components of the singular value restrict the differentiable structure of the oriented homotopy sphere as follows {\rm (}there are $28$ types of oriented smooth homotopy spheres as explained in Theorem \ref{thm:1}{\rm )}.
\begin{enumerate}
\item The number is $1$ if and only if the homotopy sphere is a standard sphere.
\item The number is $2$ if and only if the homotopy sphere is one of 16 types of the 28 types {\rm (}oriented smooth homotopy spheres of these $16$ types are represented as total spaces of linear $S^3$-bundles over $S^4$ and a standard sphere is a smooth homotopy sphere of one of these 16 types{\rm )}.
\item Every $7$-dimensional oriented smooth homotopy sphere admits a round fold map into ${\mathbb{R}}^4$ as in Theorem \ref{thm:2} such that the number of connected components of the singular value set is $3$ {\rm (}oriented homotopy spheres of the $28-16=12$ types of the 28 types are not represented as total spaces of linear $S^3$-bundles over $S^4$ but represented as connected sums of these total spaces{\rm )}.
\end{enumerate}
\end{Cor}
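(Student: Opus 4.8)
The plan is to combine the existence half of Theorem~\ref{thm:2} (applied with $(m,n)=(7,4)$) with the classical diffeomorphism classification of the linear $S^3$-bundles over $S^4$ whose total spaces are homotopy spheres, due to Milnor~\cite{milnor} and Eells--Kuiper~\cite{eellskuiper}, together with an elementary counting argument inside $\Theta_7\cong\mathbb{Z}/28\mathbb{Z}$ (Kervaire--Milnor).

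First I would collect the differential-topological input. Linear $S^3$-bundles over $S^4$ are classified by $\pi_3(SO(4))\cong\mathbb{Z}\oplus\mathbb{Z}$; the total space is a homotopy $7$-sphere exactly when the Euler number is $\pm1$, and for such bundles the Eells--Kuiper invariant $\mu$ (equivalently the Milnor $\lambda$-invariant) is a quadratic function of the remaining integer parameter. Reducing that function modulo $1$ one obtains the residues $t/28$ with $t$ running over the triangular numbers modulo $28$, and a direct enumeration shows that exactly $16$ distinct residues occur, one of which is $0$ and is realised by the quaternionic Hopf bundle with total space the standard $S^7$. This pins down the ``$16$ types'' in the statement: each is, up to orientation-preserving diffeomorphism, the total space of a linear $S^3$-bundle over $S^4$, while the remaining $12$ oriented homotopy spheres are not.

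Next I would run the construction. By Theorem~\ref{thm:2} with $m=7$, $n=4$, every connected sum of total spaces of $S^3$-bundles over $S^4$ carries a componentwise trivial round fold map into $\mathbb{R}^4$ with the two listed properties, and one can arrange the singular value set to be: a single sphere for the standard $S^7$ (the canonical projection); two concentric spheres for a single nontrivial such total space; three concentric spheres for a connected sum of two nontrivial such total spaces. To realise the remaining $12$ types with three components it then suffices to exhibit each as such a connected sum, and here the counting enters: the $16$ realised residues form a subset $S\subseteq\mathbb{Z}/28\mathbb{Z}$ with $|S|=16>14$, so for every $x$ the sets $S$ and $x-S$ each have $16$ elements and hence meet; thus $x=a+b$ with $a,b\in S$, and if $x\notin S$ then $a\ne0\ne b$. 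Translating back through $\Theta_7$, every oriented homotopy $7$-sphere is an orientation-preserving connected sum of at most two nontrivial total spaces of linear $S^3$-bundles over $S^4$, with both summands necessarily nontrivial for the $12$ non-bundle types. This gives the first assertion of the corollary and produces the maps with $1$, $2$, $3$ singular spheres needed for (1)--(3).

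It remains to argue the converse implications. For (1), a map as in Theorem~\ref{thm:2} with a single (index $0$) concentric singular sphere has, by componentwise triviality, source diffeomorphic to $D^4\times S^3$ with $\partial D^4\times S^3$ collapsed fibrewise, i.e.\ to $S^7$. For (2), one invokes the structural description of componentwise trivial round fold maps into $\mathbb{R}^4$ with two concentric singular spheres developed in \cite{kitazawa}, \cite{kitazawa2} and \cite{kitazawa4}: such a source is, up to orientation-preserving diffeomorphism, the total space of a single linear $S^3$-bundle over $S^4$, hence among homotopy spheres exactly one of the $16$ types. I expect this last point---recovering the precise \emph{diffeomorphism} type of the source, not merely its homotopy type, from the combinatorics of the singular value set---to be the main obstacle; it is exactly where the $\mu$-invariant computations of \cite{eellskuiper} and \cite{milnor}, read off from the clutching data of the bundle pieces glued along the concentric singular spheres, are indispensable.
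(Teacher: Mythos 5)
Your proposal is correct and follows exactly the route the paper indicates: the paper gives no proof of Corollary \ref{cor:1} beyond citing \cite{kitazawa2} and the remark that it follows from Theorem \ref{thm:2} together with \cite{milnor} and \cite{eellskuiper}, and your reconstruction (Euler number $\pm1$ bundles, the $\mu$-invariant taking the $16$ triangular-number residues mod $28$, the pigeonhole argument writing every class as a sum of two realised classes, and deferral to the author's earlier papers for the converse in case (2)) is precisely that argument filled in. The only loose point is the "collapsed fibrewise" description in case (1), which is more cleanly handled by noting the map is special generic and invoking Theorem \ref{thm:1}, but the conclusion is unaffected.
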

\begin{figure}
\includegraphics[width=50mm]{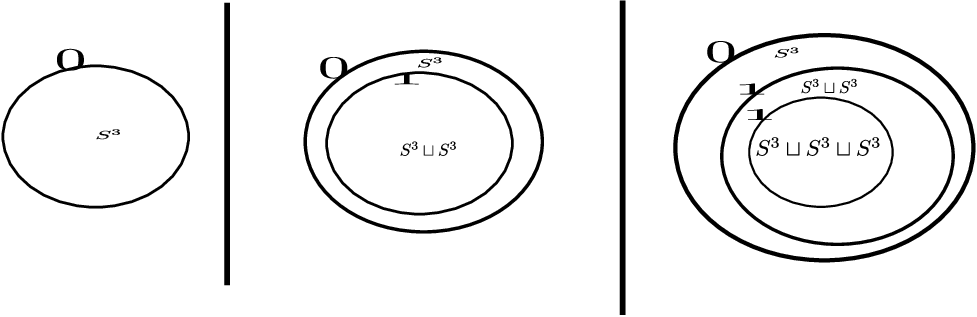}
\caption{The images of round fold maps in Corollary \ref{cor:1} : the figure in the left represents the first case, the figure in the center represents the second case and the figure in the right represents the third case.}
\label{fig:1}
\end{figure}
FIGURE \ref{fig:1} represents the images of round fold maps into ${\mathbb{R}}^4$. Circles represent singular value sets, which are disjoint unions of $3$ -dimensional standard spheres. $S^3$, $S^3 \sqcup S^3$ and $S^3 \sqcup S^3 \sqcup S^3$ represent preimages of regular values in the connected components of regular value sets they are included. $0$ and $1$ are for indices of singular points (values).
\section{New results on topological properties of explicit fold maps on $7$-dimensional manifolds and new examples of fold maps on $7$-dimensional manifolds.}
\subsection{On characteristic classes of linear bundles and manifolds.}
The following is well-known. See \cite{milnor} and see also \cite{crowleyescher} for example. Note that smooth $S^3$-bundles are linear by the theory \cite{hatcher}. Note also that we consider {\it oriented} linear bundles in the following: structure groups are rotation groups and fibers are oriented. See \cite{milnorstasheff} and \cite{steenrod} for general theory of vector bundles, (oriented) linear bundles and bundles whose fibers are more general spaces.
For {\it Euler classes} of oriented linear bundles, {\it {\rm (}$k$-th{\rm )} Steifel-Whitney classes} of linear bundles and {\it {\rm (}$k$-th{\rm )} Pontryagin classes} of such bundles are defined as cohomology classes of base spaces of the bundles and see \cite{milnorstasheff}. For notation on algebraic systems, $\mathbb{Z}$ denotes the ring of integers for example.
\begin{Thm}
\label{thm:3}
\begin{enumerate}
\item Linear $S^3$-bundles over $S^4$, which are linear and obtained by gluing two copies of trivial linear $S^3$-bundles over $D^4$ by a bundle isomorphism between the linear bundles defined on the boundaries, are classified by ${\pi}_{3}(SO(4))$ (if they are oriented), isomorphic to $\mathbb{Z} \oplus \mathbb{Z}$. More precisely, these oriented linear bundles are completely classified by the Euler classes and the cohomology classes obtained by certain procedures so that the 1st Pontryagin classes are twice these classes.
\item Fix a generator of $H^4(S^4;\mathbb{Z})$.
The total space $M_a$ of an oriented linear $S^3$-bundle over $S^4$ such that the Euler class is $a$ times the generator is $2$-connected and $H_3(M_{a};\mathbb{Z})$ is isomorphic to $\mathbb{Z}/|a|\mathbb{Z}$.
\item Fix a generator $\nu \in H^4(S^4;\mathbb{Z})$. Total spaces of oriented linear $S^3$-bundles over $S^4$ such that the Euler classes are $0$ are (co)homologically regarded as $S^4 \times S^3$. More precisely, between the total spaces of these two bundles, there exists an isomorphism of the integral cohomology rings.
These linear bundles are completely classified by the 1st Pontryagin classes and the classes are represented by $4k\nu \in H^4(S^4;\mathbb{Z})$ for an integer $k$: we can realize such a bundle for each integer $k$ and a trivial linear bundle is for the case $k=0$.
Moreover, the 1st Pontryagin classes of the total spaces {\rm (}, which are canonically oriented,{\rm )} are obtained as the pull-backs via the canonical projections.
\end{enumerate}
\end{Thm}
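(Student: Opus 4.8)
The plan is to verify all three parts by invoking the standard classification of oriented $3$-sphere bundles over $S^4$ together with elementary obstruction theory and a Gysin/Wang sequence computation. First I would recall that an oriented linear $S^3$-bundle over $S^4$ is obtained by clutching two trivial bundles over the two hemispheres $D^4_{\pm}$ along the equator $S^3$, so that isomorphism classes are in bijection with $\pi_3(SO(4))$. Since $SO(4)$ is double-covered by $S^3 \times S^3$ and $\pi_3(SO(3)) \cong \mathbb{Z}$, one gets $\pi_3(SO(4)) \cong \mathbb{Z} \oplus \mathbb{Z}$; choosing the basis adapted to the two projections $SO(4)\to SO(3)$ (equivalently, to quaternionic multiplication $x\mapsto px\bar q$), the clutching function $(h,j)$ is characterized, after passing to the associated $\mathbb{R}^4$-vector bundle, by its Euler class $e$ and first Pontryagin class $p_1$, which are linear in $(h,j)$; the classical computation (Milnor, and see \cite{crowleyescher}) gives $e = (h+j)\nu$ and $p_1 = \pm 2(h-j)\nu$ after suitable normalization, so $e$ and the auxiliary class determine the bundle and $p_1$ is twice that class. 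This settles part (1).

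For part (2), fix the generator $\nu\in H^4(S^4;\mathbb{Z})$ and let $M_a$ be the total space with $e = a\nu$. I would run the Gysin sequence of the sphere bundle $S^3 \hookrightarrow M_a \to S^4$:
\[
\cdots \to H^{k-4}(S^4) \xrightarrow{\ \cup e\ } H^k(S^4) \to H^k(M_a) \to H^{k-3}(S^4) \xrightarrow{\ \cup e\ } H^{k+1}(S^4) \to \cdots
\]
Since $H^*(S^4;\mathbb{Z})$ is concentrated in degrees $0$ and $4$, this immediately gives $H^1(M_a)=H^2(M_a)=0$, hence (with $M_a$ simply connected, as $S^4$ and $S^3$ are) $M_a$ is $2$-connected; in degree $4$ the map $\cup e\colon H^0(S^4)\to H^4(S^4)$ is multiplication by $a$, so $H^4(M_a) \cong \mathbb{Z}/|a|\mathbb{Z}$, and by Poincaré duality and the universal coefficient theorem $H_3(M_a;\mathbb{Z}) \cong \mathbb{Z}/|a|\mathbb{Z}$.

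For part (3), take $e=0$: the Gysin sequence then splits and yields $H^*(M;\mathbb{Z}) \cong H^*(S^4\times S^3;\mathbb{Z})$ as graded groups (indeed as rings, since the relevant products vanish for degree reasons), so these total spaces are cohomologically $S^4\times S^3$. Among the bundles with $e=0$ — a sublattice of $\pi_3(SO(4))$ isomorphic to $\mathbb{Z}$, generated by the clutching function lying in the kernel of the Euler-class homomorphism, i.e. one copy of $\pi_3(SO(3))$ up to the index-two subtlety — the remaining invariant is $p_1$. Naturality of Pontryagin classes under the bundle projection $M\to S^4$ shows $p_1(M)$ (for the canonical orientation) is the pullback of $p_1$ of the associated $\mathbb{R}^4$-bundle over $S^4$; the normalization computation identifying $\pi_3(SO(4))$ with $\mathbb{Z}\oplus\mathbb{Z}$ shows that on the $e=0$ sublattice $p_1$ takes precisely the values $4k\nu$, $k\in\mathbb{Z}$, with $k=0$ the product bundle. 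I expect the main obstacle to be the precise bookkeeping of the factor of $4$ (and the $\pm$ ambiguity): pinning down the exact constant requires either citing Milnor's original normalization or recomputing $p_1$ of the generating bundle via the relation $p_1 = e \smile e \pm 2(\text{something})$ together with the fact that $e=0$ forces the "something" to land in $4\mathbb{Z}\cdot\nu$, which is where the interplay between $\pi_3(SO(3))\hookrightarrow\pi_3(SO(4))$ having index two and the integrality of $p_1/2$ enters. The rest is routine.
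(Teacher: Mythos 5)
Your proposal is correct and follows exactly the route the paper points to: the paper gives no proof of this theorem, stating it as well-known and citing Milnor and Crowley--Escher, and your sketch (clutching functions in $\pi_3(SO(4))\cong\mathbb{Z}\oplus\mathbb{Z}$, Milnor's normalization $e=(h+j)\nu$, $p_1=\pm 2(h-j)\nu$, the Gysin sequence for parts (2) and (3), and the resulting values $p_1=\pm 4h\nu$ on the sublattice $h+j=0$) is precisely the standard argument from those references. The only point worth tightening is that identifying $p_1$ of the total space with the pull-back uses the stable splitting $TM\oplus\mathbb{R}\cong\pi^*(TS^4)\oplus\pi^*\xi$ together with the stable triviality of $TS^4$, not naturality alone.
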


\begin{Lem}
\label{lem:1}
For a special generic map $f:M \rightarrow {\mathbb{R}}^n$ with $m-n=1,2,3$ and $n=1,2,3,4$, the 1st Pontryagin class of $M$ vanishes.
\end{Lem}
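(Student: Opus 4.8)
The plan is to reduce the assertion to a cohomological vanishing on the $n$-dimensional quotient manifold $W_f$, exploiting the bounding manifold provided by Proposition \ref{prop:2}. Since $m-n=1,2,3$, that proposition gives a \emph{smooth} compact manifold $W$ with $\partial W=M$ which collapses onto $W_f$; in particular $W$ deformation retracts onto $W_f\subseteq W$, so the inclusion is a homotopy equivalence and $H^4(W;\mathbb{Z})\cong H^4(W_f;\mathbb{Z})$. (If $M$ is disconnected one runs the argument on each connected component, on which $f$ restricts to a special generic map.)

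First I would check that $H^4(W_f;\mathbb{Z})=0$. By Proposition \ref{prop:1}, $W_f$ is a compact connected manifold of dimension $n\le 4$ whose boundary is the singular value set. That boundary is nonempty: if it were empty, $q_f$ would be an $S^{m-n}$-bundle over the closed manifold $W_f$ and, composing with the immersion $W_f\to{\mathbb{R}}^n$ (a local diffeomorphism, as $\dim W_f=n$), the map $f$ would be a submersion of the closed manifold $M$ into ${\mathbb{R}}^n$ with $n\ge 1$, which is impossible. A compact connected $n$-manifold with nonempty boundary has the homotopy type of a CW complex of dimension at most $n-1\le 3$, hence has vanishing cohomology in degree $4$; so $H^4(W_f;\mathbb{Z})=0$ and therefore $H^4(W;\mathbb{Z})=0$.

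Then I would transport this to $M$ through the tangent bundle. The normal bundle of $M=\partial W$ in $W$ is trivial, so $i^{\ast}TW\cong TM\oplus{\varepsilon}^{1}$ for the inclusion $i\colon M\hookrightarrow W$, where ${\varepsilon}^{1}$ is a trivial real line bundle; hence $p_1(M)=p_1(TM)=p_1(TM\oplus{\varepsilon}^{1})=i^{\ast}p_1(TW)$. Since $p_1(TW)\in H^4(W;\mathbb{Z})=0$, we get $p_1(M)=0$, as claimed.

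The one point that genuinely needs the hypothesis $m-n=1,2,3$ — and the only step I regard as delicate — is the passage from the PL bounding manifold of Proposition \ref{prop:2} to a \emph{smooth} one, which is what makes $TW$ an honest vector bundle so that $p_1(TW)$ is defined. The rest is bookkeeping with dimensions; I would only emphasize that in the borderline case $n=4$ one really must use that $\partial W_f\neq\emptyset$, since $\dim W_f\le 4$ alone would not force $H^4(W_f;\mathbb{Z})$ to vanish.
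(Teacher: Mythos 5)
Your argument is correct and follows essentially the same route as the paper's own proof: both invoke Proposition \ref{prop:2} to obtain a smooth compact manifold $W$ bounding $M$ that has the homotopy type of an $(n-1)$-dimensional complex (so $H^4(W;\mathbb{Z})=0$), and then restrict the first Pontryagin class of $W$ to $M=\partial W$. Your write-up merely makes explicit two points the paper leaves tacit — that $\partial W_f\neq\emptyset$ forces the homotopy dimension down to $n-1$, and that stability of $p_1$ under adding a trivial line bundle justifies $p_1(M)=i^{\ast}p_1(TW)$.
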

\begin{proof}
By virtue of Proposition \ref{prop:2}, $M$ is bounded by a smooth, compact and orientable manifold $W$ simple homotopy equivalent to a compact $n$-dimensional manifold. The 1st Pontryagin class of $W$ is zero since the class is of degree $4$ and $W$ has the homotopy type of an ($n-1$)-dimensional polyhedron ($n-1 \leq 3$). The restriction of the class to $M$ completes the proof.
\end{proof}
Hereafter, for a $k$-dimensional oriented closed manifold, the fundamental class is the homology class of degree $k$ compatible with the orientation. If the coefficient ring is isomorphic to $\mathbb{Z}/2\mathbb{Z}$ and of order $2$, then this class does not depend on the orientation.
A homology class of a compact manifold is {\it represented} by a closed submanifold with no boundary if we can orient the submanifold and the class and the value of the homomorphism induced by the inclusion at the fundamental class agree. In the present study, we discuss these notions only in the smooth category and we can also discuss these notions in the PL and topological categories for example.

We have the following result.
\begin{Thm}
\label{thm:4}
For a smooth map $f:M \rightarrow {\mathbb{R}}^4$ on an $m$-dimensional closed, connected and oriented manifold $M$ into ${\mathbb{R}}^4$ satisfying $m \geq 7$, let $F$ be a connected component of the preimage of a regular value diffeomorphic to
$S^{m-n}$ and the homology class $[F]$ represented by $F$ satisfies the following properties.
\begin{enumerate}
\item {\rm (}On the order{\rm )} The order of the homology class $[F]$ where the coefficient ring is $\mathbb{Z}$ is infinite.
\item {\rm (}On the divisibility{\rm )} For any integer $k>1$ and any homology class $[F^{\prime \prime}]$, the class $[F]$ is not represented as $k[F^{\prime \prime}]$.
\end{enumerate}
In this situation, for the Poincar\'e dual ${\rm PD}([F])$ to $[F]$ in $H^4(M;\mathbb{Z})$, and any element represented as $4l{\rm PD}([F])$ for an integer $l$, there exist an $m$-dimensional closed, connected and oriented manifold $M^{\prime}$ and a smooth map $f^{\prime}:M^{\prime} \rightarrow {\mathbb{R}}^4$ satisfying the following properties.
\begin{enumerate}
\item There exists an isomorphism $i_{M,M^{\prime}}$ between the cohomology rings of $H^{\ast}(M;\mathbb{Z})$ and $H^{\ast}(M^{\prime};\mathbb{Z})$ such that the first Pontryagin class of $M^{\prime}$ is the value of the isomorphism at the sum of the first Pontryagin class of $M$ and $4l{\rm PD}([F])$.
\item For $f^{\prime}$, there exists a connected component $F^{\prime}$ of the preimage of a regular value diffeomorphic to $S^{m-n}$.
\item The restriction of $f$ to a small closed tubular neighborhood of the connected component $F$ of the preimage gives a trivial linear $F$-bundle over a $4$-dimensional standard disk $D$ smoothly embedded in ${\mathbb{R}}^4$ of the target. The restriction of $f^{\prime}$ to a small closed tubular neighborhood of the connected component $F^{\prime}$ of the preimage diffeomorphic to $F$ before gives a trivial linear $F^{\prime}$-bundle over the $4$-dimensional standard disk $D$. If we consider the complements and the restrictions, then for the restrictions $f_{F}$ and ${f^{\prime}}_{F^{\prime}}$ to the closed tubular neighborhoods, there exists a diffeomorphism $\Phi$ satisfying ${f^{\prime}}_{F^{\prime}} \circ \Phi=f_{F}$. For the restrictions $f_{{\rm C},F}$ and $f_{{\rm C},F^{\prime}}$ to the complementary sets of the tubular neighborhoods, a similar relation holds.
\end{enumerate}
\end{Thm}
\begin{proof}
Consider the restriction of $f$ to a small closed tubular neighborhood $N(F)$ of $F$. This gives a trivial linear $F$-bundle over the $4$-dimensional standard disk $D$, which is smoothly embedded in ${\mathbb{R}}^4$ of the target. Consider exchanging a bundle isomorphism to attach the closed tubular neighborhood to the complement $M-{\rm Int}\ N(F)$. The bundles on the boundaries are both regarded as trivial smooth and linear $S^{m-n}$-bundles over the $3$-dimensional standard sphere $\partial D$, which is smoothly embedded in ${\mathbb{R}}^4$. We may assume that a $4$-dimensional polyhedron ($4$-th cycle) by which ${\rm PD}([F])$ is represented and $F$ intersect in a finite subset of $M$ which is also in the interior of some $4$-dimensional simplex in the polyhedron. The polyhedron is locally a $4$-dimensional disk around each point of the finite subset. By the conditions on the order and the divisibility of $[F]$ and the last statement of Theorem \ref{thm:3} together with fundamental discussions on the obstruction theory of linear bundles, we consider a new suitable bundle isomorphism between the $S^{m-n}$-bundles over the $3$-dimensional standard sphere regarding the bundles as the trivial linear bundles.
As a result we naturally obtain a new map $f^{\prime}$ satisfying the second and the third properties. The last statement of Theorem \ref{thm:3} together with fundamental discussions on the obstruction theory of linear bundles also contribute to the proof of the first property. This completes the proof.
\end{proof}

In the following example, to construct fold maps to which we can apply Theorem \ref{thm:4} explicitly, we introduce a procedure of constructing a new fold map from a given fold map by a kind of surgery operations based on discussions in \cite{kitazawa5} and \cite{kitazawa6}. Also, \cite{kobayashi}, \cite{kobayashi2} and \cite{kobayashisaeki} are closely related to them and have motivated the author to present these preprints. This is also important in the following subsection.

Let $f:M \rightarrow {\mathbb{R}}^n$ be a fold map on an $m$-dimensional closed and connected manifold $M$ satisfying the relation $m>n \geq 1$ and $S$ be the one-point set or a standard sphere in the regular value set such that the projection
$f {\mid}_{f^{-1}(S)}:f^{-1}(S) \rightarrow S$ gives a trivial $S^{m-n}$-bundle over $S$. Consider a
small closed tubular neighborhood $N(S)$. We can construct a fold map $f^{\prime}:M^{\prime} \rightarrow {\mathbb{R}}^n$ on an $m$-dimensional closed and connected manifold $M^{\prime}$ satisfying the following properties.
\begin{enumerate}
\item $f^{\prime}(S(f^{\prime}))=f(S(f)) \sqcup \partial N(S)$.
\item For a small closed tubular neighborhood $N^{\prime}(S)$ of $S$ satisfying $N(S) \subset {\rm Int}\  N^{\prime}(S)$, there exists a
diffeomorphism $\Phi:f^{-1}({\mathbb{R}}^n-{\rm Int}\  N^{\prime}(S)) \rightarrow {f^{\prime}}^{-1}({\mathbb{R}}^n-{\rm Int}\  N^{\prime}(S))$ satisfying $f=f^{\prime} \circ \Phi$ on $f^{-1}({\mathbb{R}}^n-{\rm Int}\  N^{\prime}(S))$.
\item For a small closed tubular neighborhood $N^{\prime \prime}(S)$ of $S$ satisfying $N^{\prime \prime}(S) \subset {\rm Int}\  N(S)$, the composition of ${f^{\prime}} {\mid}_{{f^{\prime}}^{-1}(N^{\prime}(S)-{\rm Int}\  N^{\prime \prime}(S))}$ with a natural projection to $\partial N(S)$ gives a trivial smooth bundle: note that by the definition $N^{\prime}(S)-{\rm Int}\  N^{\prime \prime}(S)$ is regarded as a closed tubular neighborhood of $\partial N(S)$.
\item $f {\mid}_{f^{-1}(N^{\prime \prime}(S))}:f^{-1}(N^{\prime \prime}(S)) \rightarrow N^{\prime \prime}(S)$ gives a trivial smooth {\rm (}$S^{m-n} \sqcup S^{m-n}${\rm )}-bundle over $N^{\prime \prime}(S)$.
\end{enumerate}
We call this procedure of constructing $f^{\prime}$ from $f$ a {\it trivial bubbling operation with standard spheres}. The standard sphere or the one-point set $S$ is called the {\it generating manifold} of the operations. Later FIGURE \ref{fig:2} will present this operation by the image of a map with same notation (we do not use $N^{\prime \prime}(S)$ and so on in FIGURE \ref{fig:2}).
\begin{Ex}
\label{ex:3}
For example, let $m>n \geq 1$ be integers and let $f:M \rightarrow {\mathbb{R}}^n$ be a special generic map on an $m$-dimensional homotopy sphere $M$ into ${\mathbb{R}}^n$ whose restriction to the singular set is an embedding and whose singular set is diffeomorphic to $S^{n-1}$ (a two-point set in the case $n=1$). Let $S$ be a one-point set in the interior of the image. Then by a trivial bubbling operation with standard spheres whose generating manifold is a point, we obtain a new fold map $f^{\prime}:M^{\prime} \rightarrow {\mathbb{R}}^n$: in the case $n \geq 2$, the resulting map is a componentwisely trivial round fold map presented in Theorem \ref{thm:2} whose singular set consists of two connected components.
In the cases $n=4$, note that in considerable cases, we have examples explaining Theorem \ref{thm:4}.
% and the non-trivial homology class of degree $3$ generate.
\end{Ex}
\subsection{New examples of fold maps on $7$-dimensional manifolds and the topologies of these $7$-dimensional manifolds.}
\begin{Lem}
\label{lem:2}
Let $R$ be a commutative ring. For a special generic map $f:M \rightarrow {\mathbb{R}}^n$ with $m>n \geq 1$, two integers $0 \leq j_1,j_2 \leq m-n$ satisfying $j_1+j_2 \geq n$ and for classes $c_i \in H^{j_i}(M;R)$ {\rm (}i=1,2{\rm )}, the cup product of $c_1$ and $c_2$ vanishes.
\end{Lem}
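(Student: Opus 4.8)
The plan is to reduce the statement to a dimension count on the $n$-dimensional manifold $W_f$, using the canonical surjection $q_f \colon M \to W_f$ from Proposition \ref{prop:1} and the strong connectivity of $q_f$ recorded in Proposition \ref{prop:3}. (We may assume $M$, and hence $W_f = q_f(M)$, connected; otherwise one argues on each connected component, cup products being computed componentwise.)

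First I would note that, since $q_f$ is an honest (smooth) map, the induced map $q_f^{\ast} \colon H^{\ast}(W_f;R) \to H^{\ast}(M;R)$ is a homomorphism of graded rings for every commutative ring $R$. Moreover, by Proposition \ref{prop:3} --- which, reflecting the fact that $q_f$ is up to homotopy the inclusion $M = \partial W \hookrightarrow W$ of Proposition \ref{prop:2} followed by the collapse $W \to W_f$, holds with arbitrary coefficients --- the map $q_f^{\ast}$ is an isomorphism in every degree $j \le m-n$. Since $0 \le j_1 \le m-n$ and $0 \le j_2 \le m-n$, I may choose classes $a_i \in H^{j_i}(W_f;R)$ with $q_f^{\ast}(a_i) = c_i$ for $i=1,2$, so that
\[
 c_1 \cup c_2 \;=\; q_f^{\ast}(a_1) \cup q_f^{\ast}(a_2) \;=\; q_f^{\ast}(a_1 \cup a_2),
\]
with $a_1 \cup a_2 \in H^{j_1+j_2}(W_f;R)$.

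The second, and essentially only substantive, point is that $H^{j}(W_f;R) = 0$ for all $j \ge n$. By Proposition \ref{prop:1}, $W_f$ is a compact connected $n$-dimensional manifold whose boundary is the singular value set of $f$; this boundary is nonempty because $M$ is closed and $m > n$ (the image of a closed manifold under an everywhere-submersion into ${\mathbb{R}}^n$ would be a nonempty open and compact subset, which is impossible, so $S(f) \ne \emptyset$). A compact $n$-manifold with nonempty boundary has the homotopy type of a CW complex of dimension at most $n-1$, whence $H^{j}(W_f;R) = 0$ for $j \ge n$. Since $j_1 + j_2 \ge n$ by hypothesis, $a_1 \cup a_2 = 0$, and therefore $c_1 \cup c_2 = q_f^{\ast}(0) = 0$.

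The only places requiring care are these two inputs: that the comparison isomorphism of Proposition \ref{prop:3} may be taken with an arbitrary coefficient ring and is compatible with cup products (immediate once it is recognized as being induced by the map $q_f$), and that the boundary of $W_f$ is genuinely nonempty so that the top-range cohomology of $W_f$ vanishes. Both are direct consequences of Propositions \ref{prop:1} and \ref{prop:2}, after which the lemma is purely formal.
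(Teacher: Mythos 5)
Your argument is correct and is essentially the paper's own proof: both reduce the cup product to the $n$-dimensional manifold $W_f$ (the paper routes through the bounding manifold $W$ that collapses to $W_f$, you use $q_f$ directly, but by Proposition \ref{prop:3} these are the same map up to homotopy), and both conclude by observing that the target has the homotopy type of an $(n-1)$-dimensional complex, so its cohomology vanishes in degree $j_1+j_2 \geq n$. Your added remarks on nonemptiness of the boundary and on coefficients are details the paper leaves implicit, not a different method.
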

\begin{proof}
By virtue of Proposition \ref{prop:2}, $M$ is bounded by a compact PL manifold $W$ simple homotopy equivalent to $W_f$, which is an $n$-dimensional compact manifold and has the homotopy type of an ($n-1$)-dimensional polyhedron.
We apply Proposition \ref{prop:3} to complete the proof. Let $i:M \rightarrow W$ denote the inclusion. For the classes $c_1 \in H^{j_1}(M;R)$ and $c_2 \in H^{j_2}(M;R)$ satisfying $j_1+j_2 \geq n$, by Proposition \ref{prop:3}, the product of ${i^{\ast}}^{-1}(c_1)$ and ${i^{\ast}}^{-1}(c_2)$, which are well-defined, vanishes. If we restrict this to $M$, then we obtain the product of $c_1$ and $c_2$ and this vanishes.
\end{proof}
The following proposition and its proof play important roles in Theorem \ref{thm:5}.
\begin{Prop}
\label{prop:6}
Let $m>n \geq 1$ be integers and $M$ be an $m$-dimensional closed and connected manifold. We also assume the relation $0<n-\dim S<m-n<n<m-n+\dim S<m$. Let $S$ be a standard sphere or a point smoothly embedded in ${\mathbb{R}}^n$ with a trivial normal bundle.
For a smooth map $f:M \rightarrow {\mathbb{R}}^n$ to which we can do a trivial bubbling operation with standard spheres whose generating manifold is $S$, we can do such an operation so that the resulting new map $f^{\prime}:M^{\prime} \rightarrow {\mathbb{R}}^n$ satisfies the following three properties.
\begin{enumerate}
\item $H_{j}(M^{\prime};\mathbb{Z})$ is isomorphic to $H_{j}(M;\mathbb{Z}) \oplus \mathbb{Z}$ for $j=n-\dim S,m-n,n,m-n+\dim S$.
\item $H_{j}(M^{\prime};\mathbb{Z})$ is isomorphic to $H_{j}(M;\mathbb{Z})$ for $j \neq n-\dim S,m-n,n,m-n+\dim S$.
\item The $j$-th Stiefel-Whitney class of $M^{\prime}$ vanishes if that of $M$ vanishes for any $j$.
\end{enumerate}
\end{Prop}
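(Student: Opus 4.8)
\textbf{Proof proposal for Proposition \ref{prop:6}.}

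The plan is to analyze the effect of the trivial bubbling operation on homology by means of a Mayer--Vietoris decomposition of $M^{\prime}$. Recall from the description of the operation that $M^{\prime}$ is obtained from $M$ by removing the inverse image $f^{-1}(\mathrm{Int}\,N^{\prime\prime}(S))$, which is a trivial $(S^{m-n}\sqcup S^{m-n})$-bundle over $N^{\prime\prime}(S)\cong S\times D^{n-\dim S}$, and gluing in a new piece $P$ carrying the extra fold locus over $\partial N(S)$. So first I would set $X_1 = {f^{\prime}}^{-1}({\mathbb{R}}^n - \mathrm{Int}\,N^{\prime\prime}(S))$, which is diffeomorphic to $f^{-1}({\mathbb{R}}^n - \mathrm{Int}\,N^{\prime\prime}(S))$ by the diffeomorphism $\Phi$ of the operation, and $X_2 = {f^{\prime}}^{-1}(N^{\prime\prime}(S))$, the newly built piece, with $X_1\cap X_2$ diffeomorphic to the trivial $(S^{m-n}\sqcup S^{m-n})$-bundle over $S\times S^{n-\dim S - 1}$ (the collar of $\partial N^{\prime\prime}(S)$). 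The homology of $X_1$ differs from that of $M$ only by the removal of a tubular neighborhood of two copies of $F\cong S^{m-n}$ sitting over $S$; here is where the hypothesis chain $0<n-\dim S<m-n<n<m-n+\dim S<m$ enters, guaranteeing the relevant degrees $n-\dim S$, $m-n$, $n$, $m-n+\dim S$ are all distinct and that the Künneth contributions $H_*(S)\otimes H_*(S^{m-n})$ land exactly in those degrees and nowhere else.

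Next I would compute $H_*(X_2)$. The new piece $X_2$ is, up to the collar, what one gets by the standard local model for introducing an index-$0$/index-$1$ fold pair over $S$: $f^{\prime}$ restricted there looks like the trivial-disc-bundle picture glued to a piece whose generic fiber acquires one more $S^{m-n}$ component as one crosses $\partial N(S)$. Concretely, $X_2$ deformation retracts onto the mapping-cylinder-type space built from $S\times(\text{disc-bundle neighborhood})$, and a direct computation — or citing the explicit local models in \cite{kitazawa}, \cite{kitazawa2} — shows $X_2$ carries, relative to the overlap, exactly one new free $\mathbb{Z}$ in each of the degrees $n-\dim S$, $m-n$, $n$, $m-n+\dim S$, coming from the new $S^{m-n}$ fiber capped off against $S$ and its "dual" handle. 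Then the Mayer--Vietoris sequence
\[
\cdots \to H_j(X_1\cap X_2) \to H_j(X_1)\oplus H_j(X_2) \to H_j(M^{\prime}) \to H_{j-1}(X_1\cap X_2)\to\cdots
\]
is assembled; in degrees away from the four critical ones the overlap terms cancel against the removed part of $X_1$, giving isomorphism with $H_j(M;\mathbb{Z})$, and in the four critical degrees the extra $\mathbb{Z}$ in $X_2$ survives (the freedom in choosing the gluing is what lets us ensure the connecting maps do not kill it — this is the "we can perform such an operation so that" clause). For the spin statement I would note that $X_1$ is an open submanifold of $M$ minus a codimension-$(m-n+1)\geq 5$ submanifold union, so $X_1$ has the same spin structure data as $M$ in the relevant range, and $X_2$ is built from trivial bundles over spheres and discs hence spin; since $w_2$ is determined on the $2$-skeleton and $m-n\geq 3$, the spin structures match on the overlap and glue to $M^{\prime}$.

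The main obstacle I expect is controlling the connecting homomorphisms in Mayer--Vietoris precisely enough to conclude that the new $\mathbb{Z}$ summands really appear (rather than being cancelled) \emph{and} that nothing else changes — i.e., verifying the "so that the following hold" part requires showing the gluing datum can be chosen, using the triviality of the normal bundle of $S$ and the dimension inequalities, so that the boundary maps $H_j(M^{\prime})\to H_{j-1}(X_1\cap X_2)$ behave as claimed. This amounts to a careful bookkeeping with the Künneth splitting of $H_*(S\times S^{m-n})$ against the two natural maps into $H_*(X_1)$ and $H_*(X_2)$, checking that the images are complementary in the four special degrees; the strict inequality $m-n<n$ is what prevents degree collisions between the "fiber" classes and the "base" classes, and $n-\dim S < m-n$ together with $m-n+\dim S < m$ keeps the four degrees inside the stable range and mutually distinct, which is exactly what makes the cancellation analysis clean.
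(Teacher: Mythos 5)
Your overall strategy---cut $M^{\prime}$ along a fibered hypersurface, identify one side with a piece of $M$, and run Mayer--Vietoris---is in spirit the same comparison the paper makes, but your decomposition is set up incorrectly. The diffeomorphism $\Phi$ in the definition of the operation is only defined over ${\mathbb{R}}^n-{\rm Int}\,N^{\prime}(S)$, while the new fold locus of $f^{\prime}$ lies over $\partial N(S)\subset N^{\prime}(S)-N^{\prime\prime}(S)$. Hence your $X_1={f^{\prime}}^{-1}({\mathbb{R}}^n-{\rm Int}\,N^{\prime\prime}(S))$ is \emph{not} identified with $f^{-1}({\mathbb{R}}^n-{\rm Int}\,N^{\prime\prime}(S))$ by $\Phi$; it is exactly the piece containing the new singular set. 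Conversely your $X_2={f^{\prime}}^{-1}(N^{\prime\prime}(S))$ contains no fold points at all: it is a trivial $(S^{m-n}\sqcup S^{m-n})$-bundle over $N^{\prime\prime}(S)\cong S\times D^{n-\dim S}$. (Also, $f^{-1}(N^{\prime\prime}(S))$ for the \emph{original} map is a trivial $S^{m-n}$-bundle, not an $(S^{m-n}\sqcup S^{m-n})$-bundle; the doubled fiber belongs to $f^{\prime}$.) The correct cut is along $\partial N(S)$ (or $\partial N^{\prime}(S)$), writing $M^{\prime}=\bigl(M-{\rm Int}\,f^{-1}(N(S))\bigr)\cup{f^{\prime}}^{-1}(N(S))$ glued along the trivial $S^{m-n}$-bundle over $S\times S^{n-\dim S-1}$.

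More seriously, the step you explicitly defer---that the new piece carries exactly one new free $\mathbb{Z}$ in each of the four degrees relative to the overlap, and that the connecting homomorphisms can be arranged not to kill them---is the actual content of the proposition, and it cannot be "bookkept" without identifying the new piece. The paper does this concretely: ${f^{\prime}}^{-1}(N(S))$ is the product of $S$ with the source of the map of FIGURE \ref{fig:3}, namely $S^{n-\dim S}\times S^{m-n}$ with the interior of a tubular neighborhood of $S^{n-\dim S-1}\times\{\ast\}$ removed, whereas $f^{-1}(N(S))\cong S\times D^{n-\dim S}\times S^{m-n}$; the new classes are then exhibited by the explicit cycles $S^{n-\dim S}\times\{\ast^{\prime}\}$ (degree $n-\dim S$), $\{p\}\times S^{m-n}$ (degree $m-n$), and their products with $S$ (degrees $n$ and $m-n+\dim S$), with the chain of inequalities ensuring these degrees are distinct, and the "we can perform such an operation so that" clause is discharged by choosing the gluing bundle isomorphism over $\partial N(S)$ suitably. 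Without this identification your argument is a plan rather than a proof. (Your spin argument also misstates the geometry---one removes an open set, not a codimension $\geq 5$ submanifold, and $m-n+1\geq 5$ fails in the intended application $m-n=3$---though the conclusion that the new piece is built from products of spheres and discs and so contributes nothing to $w_2$ is the right idea and is essentially all the paper asserts.)
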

\begin{proof}
We represent the change of the image and the singular value set of $f$ by a trivial bubbling operation with standard spheres whose generating manifold is $S$ in FIGURE \ref{fig:2} and FIGURE \ref{fig:3}. We discuss the topologies of some manifolds and maps on them related to the proof.

$N(S)$ is a small closed tubular neighborhood of $S$ and regarded as the total space of a trivial linear $D^{n-\dim S}$-bundle by the assumption on the normal bundle. $f {\mid}_{f^{-1}(N(S))}$ gives a trivial smooth $S^{m-n}$-bundle over $N(S)$ and regarded as the product map of the projection of a trivial smooth $S^{m-n}$-bundle over $D^{n-\dim S}$ and the identity map ${\rm id}_{S}$. We can construct $f^{\prime}$ so that $f^{\prime} {\mid}_{{f^{\prime}}^{-1}(N(S))}$ is regarded as the product map of a smooth map as presented in FIGURE \ref{fig:3} and the identity map ${\rm id}_{S}$.

$D$ is a fiber of a small closed tubular neighborhood $N^{\prime}(S)$ of $S$ satisfying $N(S) \subset {\rm Int}\  N^{\prime}(S)$ as presented in the definition of a trivial bubbling operation with standard spheres: $N^{\prime}(S)$ is also regarded as a trivial linear bundle over $S$. We explain about the structure of the map in FIGURE \ref{fig:3}. This is obtained as a restriction of a round fold map on $S^{n-\dim S} \times S^{m-n}$ into ${\mathbb{R}}^{n-\dim S}$ in Theorem \ref{thm:2} or a map as in
Example \ref{ex:3} whose singular set consists of two connected components to a compact submanifold obtained by removing the interior of a small closed tubular neighborhood of $S^{n-\dim S-1} \times \{\ast\} \subset S^{n-\dim S} \times S^{m-n}$ for a point $\ast \in S^{m-n}$ (for $n-\dim S \geq 2$ and we can naturally obtain a similar function for $n-\dim S=1$: see \cite{kitazawa4}) where $S^{n-\dim S-1}$ is an equator of $S^{n-\dim S}$. For the construction of the original round fold map, see \cite{kitazawa} and \cite{kitazawa2} and see also \cite{kitazawa4}.

By the observation of the topologies of ${f^{\prime}}^{-1}(N(S))$ and $f^{-1}(N(S))$, we can construct $f^{\prime}$ so that the resulting manifold $M^{\prime}$ satisfies the first two properties by taking the bundle isomorphism between the trivial
$S^{m-n}$-bundles over $\partial N(S)$ on the boundaries for the identification in a suitable method.
We explain about the summand $\mathbb{Z}$ of $H_{j}(M;\mathbb{Z}) \oplus \mathbb{Z}$, isomorphic to $H_{j}(M^{\prime};\mathbb{Z})$.
For $j=n-\dim S$, it is generated by a class represented by $S^{n-\dim S} \times \{{\ast}^{\prime}\} \subset S^{n-\dim S} \times S^{m-n}$ in the domain of the map in FIGURE \ref{fig:3}: we must take a point ${\ast}^{\prime}$ sufficiently far from $\ast$. For $j=m-n$, it is generated by a class represented by $\{{\ast}^{\prime \prime}\} \times S^{m-n} \subset S^{n-\dim S} \times S^{m-n}$: we must take ${\ast}^{\prime \prime}$ outside the equator $S^{n-\dim S-1}$. For $j=n,m-n+\dim S$, take the products of the image of a suitable section of the new trivial bundle $f {\mid}_{{f^{\prime}}^{-1}(S)}:{f^{\prime}}^{-1}(S) \rightarrow S$ and the submanifolds by which the generators of the subgroups of the ($n-\dim S$)-th and ($m-n$)-th homology groups of $M^{\prime}$ before are represented. Desired classes are represented by these products.

We can see the third condition easily from the construction.
This completes the proof.
\end{proof}
\begin{figure}
\includegraphics[width=50mm]{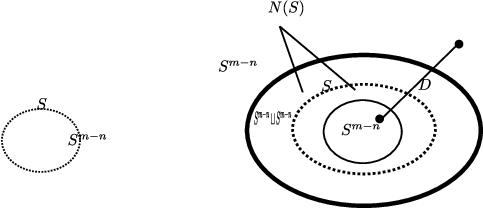}
\caption{The change of the images and the singular value sets of a smooth map by a trivial bubbling operation with spheres. $S$ is the generating manifold, $N(S)$ is a closed tubular neighborhood and $\partial N(S)$ is the new connected component of the singular value set of $f^{\prime}$: $S^{m-n}$ and $S^{m-n} \sqcup S^{m-n}$ are for preimages of regular values as FIGURE \ref{fig:1}. $D$ is a fiber of $N^{\prime}(S)$, diffeomorphic to $D^{n-\dim S}$, in the definition of a trivial bubbling operation with standard spheres: it is regarded as a trivial bundle over $S$ and satisfies $N(S) \subset {\rm Int}\  N^{\prime}(S)$.}
\label{fig:2}
\end{figure}
\begin{figure}
\includegraphics[width=50mm]{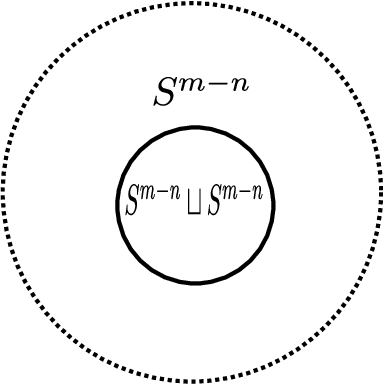}
\caption{The image and the singular value of the map $f^{\prime} {\mid}_{{f^{\prime}}^{-1}(D)}$ onto $D$ explained in FIGURE \ref{fig:2}: $S^{m-n} $and $S^{m-n} \sqcup S^{m-n}$ are for preimages of the regular values and the dotted line represents the boundary of the image (on the preimage of this, the map gives a trivial $S^{m-n}$-bundle over the boundary).}
\label{fig:3}
\end{figure}
The following theorem is another new result of the present paper. We apply Proposition \ref{prop:6} for $(m,n,\dim S)=(7,4,2)$.

For a finitely generated free commutative group $A$ and a basis $\{a_j\}$, we define the {\it dual} ${a_j}^{\ast}$ of $a_j$ as a homomorphism into $\mathbb{Z}$ satisfying ${a_{j_1}}^{\ast}(a_{j_2})={\delta}_{j_1,j_2}$ where ${\delta}_{j_1.j_2}$ is the Kronecker's delta
(${\delta}_{j_1.j_2}=1$ if $j_1 = j_2$ and otherwise $0$).
Moreover, if the group is a homology group, then we can regard the dual as a cohomology class in a canonical way and we regard the dual as the cohomology class obtained in this way.

\begin{Thm}
\label{thm:5}
Let $A$, $B$ and $C$ be free finitely generated commutative groups of rank $a$,$b$ and $c$, respectively. Let $\{a_j\}_{j=1}^{a}$, $\{b_j\}_{j=1}^{b}$ and $\{c_j\}_{j=1}^{c}$ be bases of $A$, $B$ and $C$, respectively.
For each integer $1 \leq i \leq b$, let $\{a_{i,j}\}_{j=1}^{a}$ be a sequence of integers.
Let $p \in B \oplus C$.
In this situation, there exist a $7$-dimensional closed and simply-connected spin manifold $M$ and a fold map $f:M \rightarrow {\mathbb{R}^4}$ satisfying the following properties.
\begin{enumerate}
\item $H_2(M;\mathbb{Z})$ is isomorphic to $A \oplus B$ and $H_4(M;\mathbb{Z})$ is isomorphic to $B \oplus C$. $H_3(M;\mathbb{Z})$ is free.
\item By universal coefficient theorem, $H^j(M;\mathbb{Z})$ is isomorphic to $H_j(M;\mathbb{Z})$. Via suitable isomorphisms ${\phi}_2:A \oplus B \rightarrow H_2(M;\mathbb{Z})$ and ${\phi}_4:B \oplus C \rightarrow H_4(M;\mathbb{Z})$, we can identify
$H_2(M;\mathbb{Z})$ with $A \oplus B$ and $H_4(M;\mathbb{Z})$ with $B \oplus C$. Furthermore, we can define the duals ${a_j}^{\ast}$, ${b_{j,2}}^{\ast}$, ${b_{j,4}}^{\ast}$ and ${c_j}^{\ast}$ of ${\phi}_2((a_j,0))$, ${\phi}_2((0,b_j))$, ${\phi}_4((b_j,0))$ and
${\phi}_4((0,c_j))$, respectively, and we can canonically obtain bases of $H^2(M;\mathbb{Z})$ and $H^4(M;\mathbb{Z})$. Let the composition of ${\phi}_4$ with an isomorphism mapping the elements of the basis to their Poincar\'e duals be denoted by
${\phi}_{4,3}:B \oplus C \rightarrow H^3(M;\mathbb{Z})$. Let the composition of ${\phi}_2$ with an isomorphism mapping the elements of the basis to their Poincar\'e duals be denoted by ${\phi}_{2,5}:A \oplus B \rightarrow H^5(M;\mathbb{Z})$.
For the products in $H^i(M;\mathbb{Z})$ for $i=4,5,7$, the following properties hold.
\begin{enumerate}
\item Products are in $H^4(M;\mathbb{Z})$.
\begin{enumerate}
\item The product of ${a_{j_1}}^{\ast}$ and ${a_{j_2}}^{\ast}$ and that of ${b_{j_1,2}}^{\ast}$ and ${b_{j_2,2}}^{\ast}$ vanish for any $j_1$ and $j_2$.
\item The product of ${a_{j_1}}^{\ast}$ and ${b_{j_2,2}}^{\ast}$ is $a_{j_2,j_1}{b_{j_2,4}}^{\ast} \in H^4(M;\mathbb{Z})$. for any $j_1$ and $j_2$.
\end{enumerate}
\item Products are in $H^5(M;\mathbb{Z})$.
\begin{enumerate}
\item The product of ${a_j}^{\ast}$ and ${\phi}_{4,3}((0,c))$ vanishes for any $j$ and $c \in C$.
\item The product of ${b_{j,2}}^{\ast}$ and ${\phi}_{4,3}((0,c))$ vanishes for any $j$ and $c \in C$.
\item The product of ${b_{j_1,2}}^{\ast}$ and ${\phi}_{4,3}((b_{j_2},0))$ vanishes for any pair $(j_1,j_2)$ of distinct numbers.
%\item The product of ${b_{j,2}}^{\ast}$ and ${\phi}_{4,3}((b_{j,2},0))$ is $a_{j_2,j_1}{\phi}_{2,5}((0,b_{j_2})) \in H^5(M;\mathbb{Z})$vanishes for any pair $(j_1,j_2)$ of distinct numbers.
\item The product of ${a_{j_1}}^{\ast}$ and ${\phi}_{4,3}((b_{j_2},0))$ is $a_{j_2,j_1}{\phi}_{2,5}((0,b_{j_2})) \in H^5(M;\mathbb{Z})$. for any $j_1$ and $j_2$.
\item The product of ${b_{i,2}}^{\ast}$ and ${\phi}_{4,3}((b_{i},0))$ is ${\Sigma}_{j=1}^{a} a_{i,j}{\phi}_{2,5}((a_j,0)) \in H^5(M;\mathbb{Z})$.
\end{enumerate}
\item Products are in $H^7(M;\mathbb{Z})$.
\begin{enumerate}
\item The product of  ${a_j}^{\ast}$ and ${\phi}_{2,5}((a_j,0))$ forms a generator of $H^7(M;\mathbb{Z})$.
\item The product of  ${a_{j_1}}^{\ast}$ and ${\phi}_{2,5}((a_{j_2},0))$ vanishes for any pair $(j_1,j_2)$ of distinct numbers.
\item The product of  ${a_j}^{\ast}$ and ${\phi}_{2,5}((0,b))$ vanishes for any $j$ and $b \in B$.
\item The product of  ${b_{j,2}}^{\ast}$ and ${\phi}_{2,5}((a,0))$ vanishes for any $j$ and $a \in A$.
\item The product of  ${b_{j,2}}^{\ast}$ and ${\phi}_{2,5}((0,b_j))$ forms a generator of $H^7(M;\mathbb{Z})$.
\item The product of  ${b_{j_1,2}}^{\ast}$ and ${\phi}_{2,5}((0,b_{j_2}))$ vanishes for any pair $(j_1,j_2)$ of distinct numbers.
\item The product of ${b_{j,4}}^{\ast}$ and ${\phi}_{4,3}((b_j,0))$ forms a generator of $H^7(M;\mathbb{Z})$.
\item The product of  ${b_{j_1,4}}^{\ast}$ and ${\phi}_{4,3}((b_{j_2},0))$ vanishes for any pair $(j_1,j_2)$ of distinct numbers.
\item The product of  ${b_{j,4}}^{\ast}$ and ${\phi}_{4,3}((0,c))$ vanishes for any $j$ and $c \in C$.
\item The product of  ${c_j}^{\ast}$ and ${\phi}_{4,3}((b,0))$ vanishes for any $j$ and $b \in B$.
\item The product of ${c_j}^{\ast}$ and ${\phi}_{4,3}((0,c_j))$ forms a generator of $H^7(M;\mathbb{Z})$.
\item The product of ${c_{j_1}}^{\ast}$ and ${\phi}_{4,3}((0,c_{j_2}))$ vanishes for any pair $(j_1,j_2)$ of distinct numbers.
\end{enumerate}
\end{enumerate}
\item The 3rd and the 5th Stiefel-Whitney classes of $M$ vanish.
\item Let $d$ be the isomorphism $d:H_4(M;\mathbb{Z}) \rightarrow H^4(M;\mathbb{Z})$ defined by corresponding the duals. The first Pontryagin class of $M$ is regarded as $4d \circ {\phi}_4(p) \in H^4(M;\mathbb{Z})$. The 4th Stiefel-Whitney class of $M$ vanishes.
\item $f {\mid}_{S(f)}$ is an embedding.
\item The index of each singular point is $0$ or $1$.
\item Preimages of regular values in the image of $f$ are diffeomorphic to $S^3$ or $S^3 \sqcup S^3$.
\end{enumerate}
\end{Thm}
\begin{proof}
We consider a special generic map $f_0$ as in Example \ref{ex:1} into ${\mathbb{R}}^4$ on a manifold represented as a connected sum of $a \geq 0$ copies of $S^2 \times S^5$.
We can identify $a_j \in A$ in the assumption with a generator of the 2nd homology group of $j$-th copy of $S^2 \times S^5$. This is important in constructing ${\phi}_2:A \oplus B \rightarrow H_2(M;\mathbb{Z})$: we abuse this notation for elements in $A$ and the 2nd homology classes both in several situations.
The image is represented as a boundary connected sum of $a$ copies of $S^2 \times D^2$. Note that the manifold is spin: we will apply Proposition \ref{prop:6} one after another and this is a main ingredient.

First we can choose a family consisting of $2$-dimensional standard spheres and points embedded disjointly and smoothly in the interior of the image of $f_0$ so that the following properties hold.
\begin{enumerate}
\item The number of the $2$-dimensional standard spheres is $b \geq 0$ and the number of the points is $c \geq 0$. Let $\{S_j\}_{j=1}^b$ denote the family of the $2$-dimensional standard spheres.
\item For the image, represented as a boundary connected sum of $a$ copies of $D^2 \times S^2$, let $a_j$ denote a generator of the 2nd homology group of the $j$-th copy of $D^2 \times S^2$. Here we identify the generator with $a_j \in A$ of the assumption. ${\Sigma}_{j=1}^{a} a_{i,j}a_j \in H_2(W_{f_0};\mathbb{Z})$ is represented by $S_j$.
\end{enumerate}

Note that for the second property here, the relation $2 \times 2=4$ is essential. This allows us to construct smooth embeddings sufficiently freely.
More precisely, we first consider the case where $a_j \in H_2(W_{f_0};\mathbb{Z})$ is represented by $S_{i}$ and after that consider a connected sum of them as smoothly embedded submanifolds with orientations.
We can do trivial bubbling operations with standard spheres one after another so that each chosen submanifold is the generating manifold (in the image of each step) for each operation. We can do the operations so that for the resulting map and the $7$-dimensional manifold, the fifth, sixth and seventh properties hold.

We prove the first property and statements on 2nd and 4th homology groups of the second property. To quote some of the description in the proof of Proposition \ref{prop:6} in the proof of the present theorem, we set $(m,n)=(7.4)$. We replace the submanifold $S$ in (the proof of) Proposition \ref{prop:6} by a suitable sphere $S_i$ in the present theorem.

We can identify the homology class represented by $S^{n-\dim S_i} \times \{{\ast}^{\prime}\} \subset S^{n-\dim S_i} \times S^{m-n}$ in the proof of Proposition \ref{prop:6} with $b_i \in B$ in the assumption as we have done for $a_j$'s in the beginning and as a result we obtain a desired isomorphism ${\phi}_2:A \oplus B \rightarrow H_2(M;\mathbb{Z})$. Consider the product of the image of a suitable section of a new trivial smooth bundle $f {\mid}_{{f}^{-1}(S_i)}:{f}^{-1}(S_i) \rightarrow S_i$ and the submanifold $S^{n-\dim S_i} \times \{{\ast}^{\prime}\} \subset S^{n-\dim S_i} \times S^{m-n}$ in the proof of Proposition \ref{prop:6}. For the $c$ points $p_i$, consider the submanifold $S^{n-\dim \{p_i\}} \times \{{\ast}^{\prime}\} \subset S^{n-\dim \{p_i\}} \times S^{m-n}$. This discussion yields a desired isomorphism ${\phi}_4:B \oplus C \rightarrow H_4(M;\mathbb{Z})$.

We prove properties on products of cohomology classes in the second property of the seven properties.

We prove for cases where products are in $H^4(M;\mathbb{Z})$.

Consider the product of a cocycle representing the dual ${b_{i,2}}^{\ast}$ of the class $b_{i}$ and a cocycle representing the dual ${a_{j}}^{\ast}$ of the class $a_{j}$ and value at the cycle represented as the tensor product of a cycle in the class
${\Sigma}_{j=1}^{a} a_{i,j}a_j$ and a cycle in the class represented by $S^{n-\dim S_i} \times \{{\ast}^{\prime}\} \subset S^{n-\dim S_i} \times S^{m-n}$ before (we consider a canonically obtained cohomology class in the product $M \times M$ and the product is obtained as the pull-back via the diagonal map from $M$ into $M \times M$).

We prove for cases where products are in $H^5(M;\mathbb{Z})$.
We can see that ${\phi}_{4,3} \circ {{\phi}_{4}}^{-1}$ maps each homology class before to the dual of the homology class represented by a connected component $\{{\ast}^{\prime \prime}\} \times S^{m-n}$
of the preimage of a suitable regular value. We explain about ${\phi}_{2,5} \circ {{\phi}_2}^{-1}$. This maps each ($n-\dim S_i$)-th homology class represented by $S^{n-\dim S_i} \times \{{\ast}^{\prime}\} \subset S^{n-\dim S_i} \times S^{m-n}$ to the dual of the homology class represented by the product of a fiber of the smooth trivial bundle $S^{n-\dim S_i} \times S^{m-n}$ in the proof of Proposition \ref{prop:6} over $S^{n-\dim S_i}$ and the image of a section of the trivial smooth bundle $f {\mid}_{f^{-1}(S_i)}$ over $S_i$. We must not forget that the domain of the original special generic map $f_0$ is represented as a connected sum of finitely many copies of $S^2 \times S^5$ and that the class to which this isomorphism maps $a_j \in H_2(M;\mathbb{Z})$ is the class whose dual is ${\phi}_{2,5} \circ {{\phi}_2}^{-1}(a_j) \in H^5(M;\mathbb{Z})$ in the proof.

The first three products vanish. In fact, for the pair of the cohomology classes we consider, we can take corresponding cycles, representing the classes whose duals are these cohomology classes so that each class does not vanish at the corresponding cycle and that they do not intersect. We can argue as this by observing trivial bubbling operations with standard spheres and the first property on homology groups.

We prove the first case of the last two products where the products are in $H^5(M;\mathbb{Z})$. As one of keys, remember that the class  represented by $S_{j_2}$ is represented as ${\Sigma}_{j_1=1}^{a} a_{j_2,j_1}a_{j_1} \in H_2(W_{f_0};\mathbb{Z})$ in the proof in cases where products are in $H^4(M;\mathbb{Z})$. The other key is in the argument in the proof of Proposition \ref{prop:6} just before: ${\phi}_{2,5}$ maps each ($n-\dim S_{j_2}$)-th homology class represented by $S^{n-\dim S_{j_2}} \times \{{\ast}^{\prime}\} \subset S^{n-\dim S_{j_2}} \times S^{m-n}$ to the dual of the homology class represented by the product of the image of a section of the trivial smooth bundle $f {\mid}_{f^{-1}(S_{j_2})}$ over $S_{j_2}$ and a fiber of the trvial smooth bundle $S^{n-\dim S_{j_2}} \times S^{m-n}$ over $S^{n-\dim S_{j_2}}$. We can see that the product of ${a_{j_1}}^{\ast}$ and ${\phi}_{4,3}((b_{j_2},0))$ is $a_{j_2,j_1}{\phi}_{2,5}((0,b_{j_2})) \in H^5(M;\mathbb{Z})$. for any $j_1$ and $j_2$.

We prove the last case. The coefficient of $a_j$ in the class ${\Sigma}_{j=1}^{a} a_{i,j}a_j \in H_2(W_{f_0};\mathbb{Z})$, which is represented by the sphere $S_{i}$, is $a_{i,j}$, By virtue of the discussion in the proof of Proposition \ref{prop:6} just before and by considering the class to which the isomorphism ${\phi}_{2,5} \circ {{\phi}_2}^{-1}$ maps
$a_j \in H_2(M;\mathbb{Z})$, the value of the product at the presented class is $a_{i,j}$. For the product of ${b_{i,2}}^{\ast}$ and ${\phi}_{4,3}((b_{i},0))$, consider the value at the class whose dual is
${\phi}_{2,5} \circ {{\phi}_2}^{-1}(a_j) \in H^5(M;\mathbb{Z})$: it is $a_{i,j}$. This completes the proof of the last case and the proof for cases where products are in $H^5(M;\mathbb{Z})$.

We explain about cases where products are in $H^7(M;\mathbb{Z})$. For cases where products vanish, we can argue as in similar cases for cases of $H^5(M;\mathbb{Z})$: for the pair of the cohomology classes we consider, we can take corresponding cycles, representing the classes whose duals are these cohomology classes so that each class does not vanish at the corresponding cycle and that they do not intersect. Remaining cases can be shown by virtue of the definitions of ${\phi}_{2,5}$ and ${\phi}_{4,3}$ and Poincare's duality theorem immediately.

For the third property, the 3rd Stiefel-Whitney class vanishes since homology groups are free. For the 5th Stiefel-Whitney class, first review the proof of Proposition \ref{prop:6} or the second property for cases where products are in
$H^5(M;\mathbb{Z})$. Take a generator of the ($m-n+\dim S$)-th homology group of the resulting manifold ($m-n=3$ and
$\dim S=2$ here) in the proof of Proposition \ref{prop:6}. The generator is regarded as a class represented by a product of standard spheres and the normal bundle of this product which is also a smooth submanifold is $2$-dimensional and trivial. In this argument we must not forget that the domain of the original special generic map $f_0$ is represented as a connected sum of a finite copies of $S^2 \times S^5$ taken in the smooth category and spin. This completes the proof of the fact that the 5th Stiefel-Whitney class vanishes.
We have the fourth property by virtue of Theorem \ref{thm:4} and a discussion in the proof of Proposition \ref{prop:6} for the $n$-th homology class where $n=4$: consider a connected component of a preimage consisting of two copies of $S^3$ of a regular value and apply Theorem \ref{thm:4}. On the 4th Stiefel-Whitney class, discussed in the fourth property, we can see the vanishment by fundamental properties of Stiefel-Whitney classes and Euler classes together with some ingredients of Theorem \ref{thm:3}.

This completes the proof.
\end{proof}

Note for example that by virtue of this theorem we can know the cohomology ring of the manifold $M$ completely.
On the discussion on cohomology rings, see also \cite{kitazawa6} for example. \cite{kitazawa6} is, as in Remark \ref{rem:2}, mainly devoted to studies of polyhedra representing the manifolds compactly and the present theorem and its proof is a new work.

%We show an  example. The construction is based on a method in \cite{kitazawa3-1}.

%\begin{Ex}
%\label{ex:4}
%Let $m>4$ and $k>0$ be integers. A closed and connected manifold $M$ of dimension $m$ represented as a connected sum of manifolds of a family of $k$ total spaces of l%inear $S^{m-2}$-bundles over $S^2$. Then $M$ admits a special generic map $f:M \rightarrow {\mathbb{R}}^n$ with $n \geq 3$ such that the following properties hold.
%\begin{enumerate}
%\item $W_f$ is represented as a boundary connected sum of manifolds of a family $\{S^{2} \times D^{n-2}\}$.
%\item $f {\mid}_{S(f)}$ is an embedding.
%\item  For $W_f$, $H_2(W_f;\mathbb{Z}/2\mathbb{Z})$ is generated by all the 2nd cohomology classes represented as the duals of the homology classes represented by $S^2 %\times \{\ast\} \subset S^2 \times {\rm Int}\  D^{n-2} \subset S^2 \times D^{n-2}$ in the family $\{S^{2} \times D^{n-2} \subset W_f\}$. Let $\{{\nu}_j\}_{j=1}^k$ be the cohomology classes obtained as the pull-backs of these 2nd cohomology classes represented as the duals of the homology classes represented by $S^2 \times \{\ast\} \subset S^2 \times D^{n-2}$. If we set $a_j=0 \quad (1)$ for each integer $1 \leq j \leq k$ and the 2nd Stiefel-Whitney class of $M$ is ${\Sigma}_{j=1}^{k} a_j{\nu}_j \in H^2(M;\mathbb{Z}/2\mathbb{Z})$, then the restriction of the $S^{m-3}$-bundle to a $2$-dimensional sphere in ${\rm Int}\  W_f \subset W_f$ representing the 2nd homology class which is regarded as the dual of ${\nu}_j$ or the original homology class is linear and (not) trivial.
%\end{enumerate}

%\end{Ex}

\begin{Cor}
\label{cor:2}
In the situation of Theorem \ref{thm:5}, if at least one of the following hold, then $M$ admits no special generic maps into
${\mathbb{R}}^4$.
\begin{enumerate}
\item $p \in B \oplus C$ is not zero.
\item In at least one sequence $\{a_{i,j}\}_{j=1}^{a}$, at least one non-zero number exists.
\end{enumerate}
\end{Cor}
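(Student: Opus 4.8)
The plan is to contradict one of the two obstruction lemmas proved above in each of the two cases, reading off the relevant invariants of $M$ directly from Theorem \ref{thm:5}. First I would record the auxiliary fact that $H^2(M;\mathbb{Z})$ and $H^4(M;\mathbb{Z})$ are torsion-free: by Theorem \ref{thm:5}(1) the groups $H_2(M;\mathbb{Z}) \cong A \oplus B$, $H_3(M;\mathbb{Z})$, and $H_4(M;\mathbb{Z}) \cong B \oplus C$ are free, so by the universal coefficient theorem $H^j(M;\mathbb{Z}) \cong H_j(M;\mathbb{Z})$ is free for $j=2,4$. This is exactly what licenses the step ``a nonzero integer multiple of a basis-dual class is still nonzero'' used in both cases.

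For case (1), assume $p \neq 0$. Then $\phi_4(p)$ is a nonzero element of the free group $H_4(M;\mathbb{Z})$, hence $d\circ{\phi}_4(p) \neq 0$ in $H^4(M;\mathbb{Z})$, and since that group is torsion-free, $4\, d\circ{\phi}_4(p) \neq 0$. By Theorem \ref{thm:5}(4) this element is precisely the first Pontryagin class $p_1(M)$, so $p_1(M) \neq 0$. On the other hand, a special generic map $M \to {\mathbb{R}}^4$ has source dimension $m=7$ and target dimension $n=4$, so $m-n=3$ and $n=4$; Lemma \ref{lem:1} then forces $p_1(M)=0$, a contradiction. Hence $M$ admits no special generic map into ${\mathbb{R}}^4$.

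For case (2), assume $a_{i_0,j_0} \neq 0$ for some $i_0,j_0$. By Theorem \ref{thm:5}(2)(b) the product of ${a_{j_0}}^{\ast} \in H^2(M;\mathbb{Z})$ and ${b_{i_0,2}}^{\ast} \in H^2(M;\mathbb{Z})$ equals $a_{i_0,j_0}\,{b_{i_0,4}}^{\ast} \in H^4(M;\mathbb{Z})$, which is nonzero because ${b_{i_0,4}}^{\ast}$ is part of a basis of the free group $H^4(M;\mathbb{Z})$ and $a_{i_0,j_0}\neq 0$. But for $j_1=j_2=2$ one has $0 \leq j_1,j_2 \leq m-n=3$ and $j_1+j_2=4 \geq n=4$, so Lemma \ref{lem:2} (applied with $R=\mathbb{Z}$) would force this product to vanish, again a contradiction. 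Hence $M$ admits no special generic map into ${\mathbb{R}}^4$.

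I do not expect a genuine obstacle here once Theorem \ref{thm:5} is granted: the whole argument reduces to the torsion-freeness observation together with the numerology $m-n=3$, $n=4$ needed to invoke Lemmas \ref{lem:1} and \ref{lem:2}. The only mildly delicate point is verifying that the hypotheses of Lemma \ref{lem:2} ($j_1+j_2 \geq n$ with both $j_i \leq m-n$) are met in the borderline situation $j_1=j_2=2$, $n=4$, $m-n=3$ — which they are, with equality in $j_1+j_2=n$.
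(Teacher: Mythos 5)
Your proof is correct and follows exactly the route the paper intends: its own proof of this corollary is the single line ``From Lemmas \ref{lem:1} and \ref{lem:2}, we have the result,'' and you have simply supplied the details — case (1) contradicting Lemma \ref{lem:1} via the nonvanishing of $4\,d\circ\phi_4(p)$ in the torsion-free group $H^4(M;\mathbb{Z})$, and case (2) contradicting Lemma \ref{lem:2} with $j_1=j_2=2$, $m-n=3$, $n=4$. Nothing further is needed.
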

\begin{proof}
By virtue of Lemmas \ref{lem:1} and \ref{lem:2}, we have the result.
\end{proof}
For the family of $7$-dimensional manifolds we can obtain in Theorem \ref{thm:5} with Corollary \ref{cor:2}, their homology groups and fundamental groups are isomorphic to those of some $7$-dimensional manifolds admitting special generic maps in Example \ref{ex:1} into ${\mathbb{R}}^4$. Moreover, Propositions \ref{prop:3} and \ref{prop:5} state that homology groups of $7$-dimensional closed and simply-connected manifolds admitting special generic maps into ${\mathbb{R}}^4$ are isomorphic to these groups.

Theorem \ref{thm:1} and Corollary \ref{cor:2} may imply that special generic maps restrict the cohomology rings and the classical characteristic classes such as Stiefel-Whitney classes and Pontragin classes of $7$-dimensional closed and (simply-)connected manifolds considerably.

On the other hand, Corollary \ref{cor:1} and Theorem \ref{thm:5} may imply the following fact: the class of fold maps such that the restrictions to the singular sets are embeddings and that preimages of regular values are disjoint unions of spheres on $7$-dimensional closed manifolds (into ${\mathbb{R}}^4$) or suitable wider classes cover considerably wide classes of $7$-dimensional closed and (simply-)connected manifolds, as manifolds admitting these maps.

Note that recently Kreck (\cite{kreck}) studied $7$-dimensional closed and simply-connected manifolds with free 2nd homology groups and that he found a comprehensive classification in terms of explicit (co)bordism relations. Theorem \ref{thm:5} may be a pioneering result for us to understand the class of these $7$-dimensional manifolds in more geometric and constructive ways.

Note also that before his study, comprehensive algebraic topological and differential topological classifications of $7$-dimensional closed and $2$-connected manifolds were studied in \cite{crowleyescher} and \cite{crowleynordstrom} for example.

We will show another version of Theorem \ref{thm:5} for $7$-dimensional manifolds which may not be spin.

The following gives special generic maps into ${\mathbb{R}}^n$ resembling ones in Example \ref{ex:1}. See \cite{saeki} and in this case see also \cite{nishioka} for example.

\begin{Ex}
\label{ex:5}
Let $m>4$, $n \geq 3$ and $l>0$ be integers. Set $a_j=0,1$ for each integer $1 \leq j \leq l$.

In this situation, there exists an $m$-dimensional closed and connected manifold $M$ represented as a connected sum of $l>0$ total spaces of linear $S^{m-2}$-bundles over $S^2$ taken in the smooth category and a special generic map
$f:M \rightarrow {\mathbb{R}}^n$ such that the following properties hold.
\begin{enumerate}
\item $W_f$ is represented as a boundary connected sum of manifolds of exactly $l$ copies of $S^2 \times D^{n-2}$ taken in the smooth category.
\item $f {\mid}_{S(f)}$ is an embedding.
\item  For $W_f$, $H^2(W_f;\mathbb{Z}/2\mathbb{Z})$ is generated by the set of all 2nd cohomology classes represented as the duals of the classes represented by $S^2 \times \{\ast\} \subset S^2 \times {\rm Int}\  D^{n-2} \subset S^2 \times D^{n-2}$ in the family $\{S^{2} \times D^{n-2} \subset W_f\}$ of the $l$ copies. Let $\{{{\nu}_j}^{\ast}\}_{j=1}^l \subset H^2(M;\mathbb{Z})$ be the sequence of the cohomology classes obtained as the pull-backs of these 2nd cohomology classes represented as the duals of the homology classes ${\nu}_j$ represented by the copies of $S^2 \times \{\ast\} \subset S^2 \times D^{n-2}$. In this situation, the 2nd Stiefel-Whitney class of $M$ is ${\Sigma}_{j=1}^{l} a_j{{\nu}_j}^{\ast} \in H^2(M;\mathbb{Z}/2\mathbb{Z})$ and the trivial smooth $S^{m-3}$-bundle $f {\mid}_{f^{-1}(S_{{\nu}_i})}$ over a $2$-dimensional sphere $S_{{\nu}_i}$ in ${\rm Int}\  W_f \subset W_f$ is linear and trivial if $a_i=0$ and not trivial if $a_i=1$ where the 2nd homology class ${\nu}_i$ is represented by $S_{{\nu}_i}$.
\end{enumerate}
\end{Ex}
We will prove Proposition \ref{prop:6} where $n-\dim S=2$ holds with a few additional statements.
\begin{Prop}
\label{prop:7}
Let $m>n \geq 1$ be integers and let $M$ be an $m$-dimensional closed and connected manifold. We also assume the relation $0<n-\dim S=2<m-n<n<m-n+\dim S=m-2$. Let $S$ be a standard sphere or a one-point set smoothly embedded in ${\mathbb{R}}^n$ with a trivial normal bundle. For a smooth map $f:M \rightarrow {\mathbb{R}}^n$ to which we can do a trivial bubbling operation with standard spheres whose generating manifold is $S$, we can do such an operation so that for the resulting new map $f^{\prime}:M^{\prime} \rightarrow {\mathbb{R}}^n$, the following properties hold.

\begin{enumerate}
\item $H_{j}(M^{\prime};\mathbb{Z})$ is isomorphic to $H_{j}(M;\mathbb{Z}) \oplus \mathbb{Z}$ for $j=2,m-n,n,m-n+\dim S$.
\item $H_{j}(M^{\prime};\mathbb{Z})$ is isomorphic to $H_{j}(M;\mathbb{Z})$ for $j \neq 2,m-n,n,m-n+\dim S$.
\item $H^{2}(M^{\prime};\mathbb{Z}/2\mathbb{Z})$ is isomorphic to $H^{2}(M;\mathbb{Z}/2\mathbb{Z}) \oplus \mathbb{Z}/2\mathbb{Z}$.
\end{enumerate}
Furthermore, for a suitable isomorphism $\phi$ from $H^{2}(M;\mathbb{Z}/2\mathbb{Z}) \oplus \mathbb{Z}/2\mathbb{Z}$ onto $H^{2}(M^{\prime};\mathbb{Z}/2\mathbb{Z})$, if the 2nd Stiefel-Whitney class of $M$ is $a_0$ and $b=0,1$, then we can construct $M^{\prime}$ so that the 2nd Stiefel-Whitney class is $\phi((a_0,b))$.
\end{Prop}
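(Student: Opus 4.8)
The plan is to run the construction from the proof of Proposition~\ref{prop:6} --- whose hypotheses hold here, the chain $0<n-\dim S<m-n<n<m-n+\dim S<m$ specializing to $0<2<m-n<n<m-2<m$ --- and to change only the model glued over the fibre $D$ of $N^{\prime}(S)$ in order to control the second Stiefel--Whitney class. The first two conclusions are then literally those of Proposition~\ref{prop:6}. Conclusion (3) follows from the universal coefficient theorem, since $H^{2}(M^{\prime};\mathbb{Z}/2\mathbb{Z})$ is determined by $H_{2}(M^{\prime};\mathbb{Z})\cong H_{2}(M;\mathbb{Z})\oplus\mathbb{Z}$ and $H_{1}(M^{\prime};\mathbb{Z})\cong H_{1}(M;\mathbb{Z})$ (degree $1$ is not among the exceptional degrees $2,m-n,n,m-n+\dim S$), the new $\mathbb{Z}/2\mathbb{Z}$ summand being detected by the new degree-$2$ homology class, represented in the notation of the proof of Proposition~\ref{prop:6} by $S^{2}\times\{{\ast}^{\prime}\}\subset S^{2}\times S^{m-n}=S^{n-\dim S}\times S^{m-n}$ in the source of the map of FIGURE \ref{fig:3}. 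It is precisely the equality $n-\dim S=2$ that makes this new class of degree $2$, hence able to carry $w_{2}$; this is why the proposition is stated separately from Proposition~\ref{prop:6}.

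The one new ingredient is the choice of model over $D$. In Proposition~\ref{prop:6} the model is cut out, by deleting a small closed tubular neighbourhood of $S^{1}\times\{\ast\}$ (the equator of the base $S^{2}$ times a point of the fibre $S^{m-n}$), from a round fold map as in Theorem~\ref{thm:2} on the \emph{trivial} $S^{m-n}$-bundle $S^{2}\times S^{m-n}$ over $S^{2}$ into $\mathbb{R}^{2}$. I would instead allow a twisted model. Since $m-n>2$, the group $\pi_{1}(SO(m-n+1))\cong\mathbb{Z}/2\mathbb{Z}$ is nontrivial, so there is an oriented rank-$(m-n+1)$ vector bundle $E$ over $S^{2}$ with $w_{2}(E)$ the generator of $H^{2}(S^{2};\mathbb{Z}/2\mathbb{Z})$; its unit sphere bundle $P\to S^{2}$ is a linear $S^{m-n}$-bundle and admits a section $\sigma$, the only obstruction lying in $H^{2}(S^{2};\pi_{1}(S^{m-n}))=0$ as $m-n\geq 2$. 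Theorem~\ref{thm:2} provides a round fold map on $P$ into $\mathbb{R}^{2}$ of the same combinatorial type as the one on $S^{2}\times S^{m-n}$, and since $SO(m-n+1)$ is connected the restriction of $P$ over the equator $S^{1}\subset S^{2}$ is again a product; hence, after deleting a small closed tubular neighbourhood of $\sigma(S^{1})$, the remaining piece has boundary $S^{1}\times S^{m-n}$ --- exactly the gluing datum used in the trivial bubbling operation. Carrying out the operation with this twisted model over $D$ when $b=1$ and with the product model when $b=0$ produces the required $M^{\prime}$: off the bubble the construction is unchanged, so $w_{1}(M^{\prime})=w_{1}(M)$ and (by excision, using triviality of the normal bundle of $S$) the $H^{2}(M;\mathbb{Z}/2\mathbb{Z})$-component of $w_{2}(M^{\prime})$ equals $a_{0}$; while the new $2$-sphere is now $\sigma(S^{2})$ times a point of $S$, whose normal bundle in $M^{\prime}$ is $E^{\prime}\oplus\mathbb{R}^{\dim S}$, with $E^{\prime}$ the orthogonal complement of the line spanned by $\sigma$ in $E$, of second Stiefel--Whitney class $w_{2}(E^{\prime})=w_{2}(E)=b$; since $w_{1}=0$ along this sphere, $\langle w_{2}(M^{\prime}),[\sigma(S^{2})]\rangle=b$. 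Reading this against the splitting of conclusion (3) yields an isomorphism $\phi$ with $w_{2}(M^{\prime})=\phi(a_{0},b)$. This is the same mechanism that controls $w_{2}$ for the special generic maps of Example~\ref{ex:4}.

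The homological part --- conclusions (1), (2), (3) --- is inherited from Proposition~\ref{prop:6}, so the real work, and the main obstacle, is to check that replacing $S^{2}\times S^{m-n}$ by the twisted bundle $P$ stays compatible with the trivial bubbling operation: that the $S^{m-n}$-bundle along which the bubble is attached to $M\setminus{\rm Int}\,N^{\prime}(S)$ remains trivial, that $\sigma$ may be chosen disjoint from the deleted tube, and that the round fold map on $P$ restricts with exactly the structure demanded in the definition of the operation and in the proof of Proposition~\ref{prop:6}. Each of these uses the hypothesis --- connectivity of $SO(m-n+1)$ and the vanishing of $\pi_{1}(S^{m-n})$, both consequences of $m-n>2=n-\dim S$ --- and pinning these compatibilities down is where I would expect to have to be careful.
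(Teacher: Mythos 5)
Your proposal follows essentially the same route as the paper's (sketched) proof: replace the trivial model $S^{2}\times S^{m-n}$ over the fibre $D$ by the total space of the non-spin linear $S^{m-n}$-bundle over $S^{2}$ when $b=1$, using the trivialization over the equator to keep the gluing data of the bubbling operation unchanged, and inherit conclusions (1)--(3) from Proposition \ref{prop:6}. In fact your write-up supplies details the paper leaves implicit (existence of the section $\sigma$, triviality over the equator via connectedness of $SO(m-n+1)$, and the evaluation $\langle w_{2}(M^{\prime}),[\sigma(S^{2})]\rangle=b$ via the normal bundle), so it is a correct and somewhat more complete version of the same argument.
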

\begin{proof}[A sketch of the proof of only additional statements]
The third property and the additional statement on the 2nd Stiefel-Whitney classes of the manifolds in the last are additional statements. The third property is in fact straightforward. We can prove the property as the proof of Proposition \ref{prop:6}: by observing the topologies of ${f^{\prime}}^{-1}(N(S))$ and $f^{-1}(N(S))$ as in the proof of Proposition \ref{prop:6}, we can do ($N(S)$ is a closed tubular neighborhood as the proof of Proposition \ref{prop:6}). For the additional statement on the 2nd Stiefel-Whitney classes of the manifolds, the case $b=0$ is also shown similarly to Proposition \ref{prop:6} by observing the construction. We need to show for the case $b=1$.
We replace the map in FIGURE \ref{fig:3} by a map obtained as a restriction of a round fold map on the total space $E$ of a linear $S^{m-n}$-bundle over $S^2$ whose Stiefel-Whitney class does not vanish into ${\mathbb{R}}^2$ in Theorem \ref{thm:2} or a map as in Example \ref{ex:3} whose singular set consists of two connected components to a compact submanifold obtained by removing the interior of a small closed tubular neighborhood of $S^{n-\dim S-1} \times \{\ast\} \subset E$ for a point $\ast \in S^{m-n}$ where $S^{n-\dim S-1}$ is an equator of $S^{n-\dim S}$ and where we consider a trivial bundle over the equator obtained by restricting a trivial bundle obtained by a suitable trivialization over a hemisphere (, diffeomorphic to $D^{n-\dim S}$,) in $S^{n-\dim S}$. By the construction, this completes the proof for the case $b=1$.
\end{proof}

We can give a theorem similar to Theorem \ref{thm:5} for $7$-dimensional closed and simply-connected manifolds which may not be spin.

\begin{Thm}
\label{thm:6}
Let $A$, $B$ and $C$ be free finitely generated commutative groups whose ranks are $a$,$b$ and $c$, respectively. Let $\{a_j\}_{j=1}^{a}$, $\{b_j\}_{j=1}^{b}$ and $\{c_j\}_{j=1}^{c}$ be bases of $A$, $B$ and $C$, respectively.
For each integer $1 \leq i \leq b$, let $\{a_{i,j}\}_{j=1}^{a}$ be a sequence of integers.
Let $p \in B \oplus C$.
Let $A^{\prime}:=A \otimes \mathbb{Z}/2\mathbb{Z}$ and $B^{\prime}:=B \otimes \mathbb{Z}/2\mathbb{Z}$ and let $q_1 \in A^{\prime}$ and $q_2 \in B^{\prime}$.
In this situation, there exist a $7$-dimensional closed and simply-connected manifold $M$ and a fold map $f:M \rightarrow {\mathbb{R}^4}$ satisfying the following properties.
\begin{enumerate}
\item $H_2(M;\mathbb{Z})$ is isomorphic to $A \oplus B$ and $H_4(M;\mathbb{Z})$ is isomorphic to $B \oplus C$.
$H_3(M;\mathbb{Z})$ is free.
\item By universal coefficient theorem, $H^j(M;\mathbb{Z})$ is isomorphic to $H_j(M;\mathbb{Z})$. Via suitable isomorphisms ${\phi}_2:A \oplus B \rightarrow H_2(M;\mathbb{Z})$ and ${\phi}_4:B \oplus C \rightarrow H_4(M;\mathbb{Z})$, we can identify $H_2(M;\mathbb{Z})$ with $A \oplus B$ and $H_4(M;\mathbb{Z})$ with $B \oplus C$. Furthermore, we can define the duals ${a_j}^{\ast}$, ${b_{j,2}}^{\ast}$, ${b_{j,4}}^{\ast}$ and ${c_j}^{\ast}$ of ${\phi}_2((a_j,0))$,
${\phi}_2((0,b_j))$, ${\phi}_4((b_j,0))$ and ${\phi}_4((0,c_j))$, respectively and we can canonically obtain bases of
$H^2(M;\mathbb{Z})$ and $H^4(M;\mathbb{Z})$.
For the products, the following two hold.
\begin{enumerate}
\item The product of ${a_{j_1}}^{\ast}$ and ${a_{j_2}}^{\ast}$ and that of ${b_{j_1,2}}^{\ast}$ and ${b_{j_2,2}}^{\ast}$ vanish for any $j_1$ and $j_2$.
\item The product of ${a_{j_1}}^{\ast}$ and ${b_{j_2,2}}^{\ast}$ is $2a_{j_2,j_1}{b_{j_2,4}}^{\ast} \in H^4(M;\mathbb{Z})$ for any $j_1$ and $j_2$.
\end{enumerate}
\item Let $d_2$ be the isomorphism $d_2:H_2(M;\mathbb{Z}/2\mathbb{Z}) \rightarrow H^2(M;\mathbb{Z}/2\mathbb{Z})$ defined canonically by corresponding the duals as before under the additional condition that $\mathbb{Z}/2\mathbb{Z}$ is tensored to each of the commutative groups, the 2nd Stiefel-Whitney class of $M$ is regarded as
$d_2 \circ {\phi}_2((q_1,q_2)) \in H^2(M;\mathbb{Z}/2\mathbb{Z})$: we canonically define ${\phi}_2$ for the case where the coefficient ring is $\mathbb{Z}/2\mathbb{Z}$.
\item The 3rd and the 5th Stiefel-Whitney classes of $M$ vanish.
\item Let $d$ be the isomorphism $d:H_4(M;\mathbb{Z}) \rightarrow H^4(M;\mathbb{Z})$ defined by corresponding the duals. The first Pontryagin class of $M$ is regarded as  $4d \circ {\phi}_4(p) \in H^4(M;\mathbb{Z})$. The 4th Stiefel-Whitney class of $M$ vanishes.
\item $f {\mid}_{S(f)}$ is an embedding.
\item The index of each singular point is $0$ or $1$.
\item Preimages of regular values in the image of $f$ are diffeomorphic to $S^3$ or $S^3 \sqcup S^3$.
\end{enumerate}
\end{Thm}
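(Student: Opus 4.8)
The plan is to mimic the proof of Theorem \ref{thm:5} almost verbatim, but start from the non-spin special generic maps of Example \ref{ex:4} (on connected sums of total spaces of linear $S^{5}$-bundles over $S^2$) and use Proposition \ref{prop:7} in place of Proposition \ref{prop:6} so that we retain control of the $2$nd Stiefel--Whitney class while performing the bubbling operations. First I would fix an auxiliary map $f_0\colon M_0 \to \mathbb{R}^4$ as in Example \ref{ex:4} on a connected sum of $a$ copies of total spaces of linear $S^5$-bundles over $S^2$, choosing the bundles so that, relative to the distinguished basis $\{\nu_j\}$ of $H^2(M_0;\mathbb{Z}/2\mathbb{Z})$ coming from $W_{f_0}$, the $2$nd Stiefel--Whitney class of $M_0$ realizes the prescribed class $q_1 \in A'$ (this is exactly the freedom recorded in the last item of Example \ref{ex:4}). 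Note $H^2(M_0;\mathbb{Z})\cong A$ is still free even though $M_0$ need not be spin, because each summand is $(S^2\times S^5)$-like on the level of integral homology; the failure of spinness is visible only mod $2$ through the chosen bundle.

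Next I would choose, exactly as in Theorem \ref{thm:5}, a disjoint family of $b$ embedded $2$-spheres $\{S_i\}$ in $\mathrm{Int}\, W_{f_0}$ whose homology classes are $\sum_{j=1}^a a_{i,j}a_j$, together with $c$ points, and perform a sequence of trivial bubbling operations with standard spheres, using at each stage Proposition \ref{prop:7} with $(m,n,\dim S)=(7,4,2)$ (so $n-\dim S = 2 = m-n-1$ in the borderline case that proposition is tailored for). The homology bookkeeping — items (1), (6), (7), (8) of the theorem and the computation of the cup products ${a_{j_1}}^{\ast}\smile{b_{j_2,2}}^{\ast} = 2a_{j_2,j_1}{b_{j_2,4}}^{\ast}$ — is identical to the argument in Theorem \ref{thm:5}; the extra factor $2$ here (compared with the factor $1$ in Theorem \ref{thm:5}) is precisely the mod-$2$ reduction phenomenon, and traces back to replacing the trivial $S^{m-n}$-bundle over $S^2$ used in the proof of Proposition \ref{prop:6} by the nontrivial linear one used in Proposition \ref{prop:7}: the fiber class over the $2$-sphere now bounds with multiplicity $2$. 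The $2$nd Stiefel--Whitney class is then $d_2\circ\phi_2((q_1,q_2))$ by combining the choice of $f_0$ (accounting for the $q_1\in A'$ part) with the additional statement of Proposition \ref{prop:7} (accounting for the $q_2\in B'$ part, where at each bubbling step over $S_i$ we prescribe whether the new mod-$2$ class is switched on according to the $i$-th coordinate of $q_2$). The $3$rd and $5$th Stiefel--Whitney classes vanish as in Theorem \ref{thm:5} (freeness of the integral homology for $w_3$, and the product-of-spheres-with-trivial-normal-bundle structure of the generators of $H_5$ for $w_5$), and the first Pontryagin class is installed as $4d\circ\phi_4(p)$ by a final application of Theorem \ref{thm:4} to a connected component of an inverse image consisting of two copies of $S^3$, with $w_4=0$ following from Theorem \ref{thm:3} and the fact that $p_1$ is divisible by $4$.

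The main obstacle I expect is not the homological accounting but making the $2$nd Stiefel--Whitney class come out exactly right while \emph{simultaneously} keeping the cup-product structure and $p_1$ under control — that is, checking that the choices made for $f_0$ (to realize $q_1$) and the choices made at the bubbling steps (to realize $q_2$) do not interfere with, or get overwritten by, the later surgery that installs $p_1$, and that the basis $\{{a_j}^{\ast}, {b_{j,2}}^{\ast}, {b_{j,4}}^{\ast}, {c_j}^{\ast}\}$ in which $w_2$ is being described is literally the same basis used for the cup products and for $p_1$. This requires checking that the $p_1$-altering operation of Theorem \ref{thm:4}, which modifies a bundle isomorphism over a $3$-sphere linking a $4$-dimensional cycle dual to $[F]$, leaves the mod-$2$ cohomology and its product structure unchanged — which it does, since the relevant clutching change lies in the kernel of reduction mod $2$ (it is $4l$ times a generator) and affects only a degree-$4$ integral class. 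Once that compatibility is verified, the remaining verifications are the routine ones already carried out in the proofs of Proposition \ref{prop:7} and Theorem \ref{thm:5}.
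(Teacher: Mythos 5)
Your overall strategy matches the paper's: start from a special generic map $f_0$ as in Example \ref{ex:4} on a connected sum of total spaces of linear $S^5$-bundles over $S^2$ (choosing the bundles so that $w_2$ of the starting manifold realizes $q_1$), perform trivial bubbling operations governed by Proposition \ref{prop:7} (using its additional statement to realize $q_2$), and install $p_1$ via Theorem \ref{thm:4}. However, there is a genuine gap at the central step. You take the generating spheres $S_i$ to represent the classes $\sum_{j=1}^{a} a_{i,j}a_j$, ``exactly as in Theorem \ref{thm:5}.'' The paper instead takes $S_i$ to represent $\sum_{j=1}^{a} 2a_{i,j}a_j$, and this doubling is not cosmetic: a trivial bubbling operation with generating manifold $S_i$ requires $f_0\mid_{f_0^{-1}(S_i)}$ to be a \emph{trivial} $S^3$-bundle over $S_i$, and when $M_0$ is non-spin (which is forced whenever $q_1\neq 0$) the restriction of the $S^3$-bundle $q_{f_0}$ to a sphere representing $\sum_j a_{i,j}a_j$ is the linear $S^3$-bundle over $S^2$ classified by $\sum_j a_{i,j}\langle w_2, a_j\rangle \bmod 2$, which is nontrivial for odd values of this sum. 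With your choice the operation simply cannot be performed in general; with the doubled classes the restricted bundles become trivial, which is exactly the point the paper flags as ``essential.''

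Relatedly, your explanation of the factor $2$ in the cup product ${a_{j_1}}^{\ast}\smile{b_{j_2,2}}^{\ast}=2a_{j_2,j_1}{b_{j_2,4}}^{\ast}$ --- that it ``traces back to replacing the trivial $S^{m-n}$-bundle over $S^2$ used in Proposition \ref{prop:6} by the nontrivial linear one used in Proposition \ref{prop:7},'' with the fiber class ``bounding with multiplicity $2$'' --- is not the correct mechanism and does not survive scrutiny (for an $S^3$-bundle over $S^2$ the Euler class vanishes, so no such multiplicity appears in the Gysin sequence). In the paper the factor $2$ is inherited directly from the homology class $\sum_j 2a_{i,j}a_j$ of the generating sphere, via the same pairing computation as in Theorem \ref{thm:5}; with your undoubled spheres the product would come out as $a_{j_2,j_1}{b_{j_2,4}}^{\ast}$, contradicting the statement. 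The remaining points of your argument (realization of $q_1$ and $q_2$, vanishing of $w_3$ and $w_5$, the compatibility of the $p_1$-altering surgery with the mod-$2$ data) are consistent with the paper's intent, but the proof as written needs the doubled generating classes to go through.
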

\begin{proof}
We consider a special generic map $f_0$ as in Example \ref{ex:5} into ${\mathbb{R}}^4$ on a manifold represented as a connected sum of $a \geq 0$ total spaces of linear $S^5$-bundles over $S^2$. The image is represented as a boundary connected sum of $a$ copies of $S^2 \times D^2$. We will apply Proposition \ref{prop:7} one after another and this is a main ingredient.

First as the proof of Theorem \ref{thm:5}, we can choose a family consisting of $2$-dimensional standard spheres and points embedded disjointly in the interior of the image of $f_0$ so that the following two properties hold.
\begin{enumerate}
\item The number of the $2$-dimensional standard spheres is $b \geq 0$ and the number of the points is $c \geq 0$. Let $\{S_j\}_{j=1}^b$ denote the family of the $2$-dimensional standard spheres.
\item For the image, represented as a boundary connected sum of $a$ copies of $D^2 \times S^2$, let $a_j$ denote the class represented by a generator of the 2nd homology group of the $j$-th copy of $D^2 \times S^2$ by identifying the generator with $a_j \in A$ of the assumption as we did in the proof of Theorem \ref{thm:5}. ${\Sigma}_{j=1}^{a} 2a_{i,j}a_j \in H_2(W_{f_0};\mathbb{Z})$ is represented by the class $S_{i}$.
\end{enumerate}
We can do trivial bubbling operations with standard spheres one after another so that each chosen submanifold is the generating manifold (in the image of each step) for each operation. The coefficients $2$ in ${\Sigma}_{j=1}^{a} 2a_{i,j}a_j \in H_2(W_{f_0};\mathbb{Z})$ are essential to enable us to do trivial bubbling operations with standard spheres: the smooth $S^3$-bundles over $S_j$ become trivial.
By a discussion similar to that of the proof of Theorem \ref{thm:5}, we can do the operations so that the resulting map and the $7$-dimensional manifold satisfy all the properties in the statements.
\end{proof}
\begin{Rem}
\label{rem:1}
In Theorem \ref{thm:6}, for example, in the second property, we can argue similarly for other products as in the proof of the second property of Theorem \ref{thm:5} and we can obtain essentially same facts.
\end{Rem}

We will present another theorem.

The total space of a trivial smooth $S^3$-bundle over a closed interval $I$ is diffeomorphic to $S^3 \times I$. The $j$-th homology group vanishes for $j \neq 0,3$. The $0$-th homology group whose coefficient ring is $\mathbb{Z}$ is isomorphic to $\mathbb{Z}$. The $3$rd homology group whose coefficient ring is $\mathbb{Z}$ is also isomorphic to $\mathbb{Z}$ and generated by the class represented by $S^3 \times \{0\}$.
Consider a Morse function $f_{{{\mathbb{C}P}^2}^{\prime}}$ on a $4$-dimensional compact manifold obtained by removing the interior of the disjoint union of two smoothly and disjointly embedded $4$-dimensional standard disks froma copy of the complex projective plane ${\mathbb{C}P}^2$.
The boundary is diffeomorphic to the disjoint union of two copies of $S^3$. We have the function satisfying the following properties.
\begin{enumerate}
\item It has exactly one singular point.
\item The preimage of the minimum and that of the maximum are connected components of the boundary.
\item Neither the minimum or the maximum is the singular value.
\end{enumerate}
The fundamental group and the $j$-th homology group vanishes for $j \neq 0, 2, 3$. The $0$-th homology group with coefficient ring $\mathbb{Z}$ is isomorphic to $\mathbb{Z}$. The 3rd homology group with coefficient ring $\mathbb{Z}$ is isomorphic to $\mathbb{Z}$ and generated by the class represented by a connected component of the boundary. The 2nd homology group with coefficient ring $\mathbb{Z}$ is isomorphic to $\mathbb{Z}$ and generated by the class represented by a $2$-dimensional sphere smoothly embedded in the interior of the $4$-dimensional manifold. Note that the manifold is not spin. Note also that this is regarded as a restriction of a Morse function $f_{{\mathbb{C}P}^2}$ with exactly three singular points on the complex projective plane. Moreover, the square of the dual of the class represented by the $2$-dimensional sphere in the original copy of the complex projective plane ${\mathbb{C}P}^2$ is a generator of the $4$-th integral cohomology group of the copyof ${\mathbb{C}P}^2$, isomorphic to $\mathbb{Z}$.

We consider a special generic map $f_0$ as in Example \ref{ex:1} into ${\mathbb{R}}^4$ on a manifold $M_0$ represented as a connected sum of $a \geq 0$ copies of $S^3 \times S^4$ considered in the smooth category. The image is represented as a boundary connected sum of $a$ copies of $S^3 \times D^1$ considered in the smooth category.

Starting from $f_0:M_0 \rightarrow {\mathbb{R}^4}$, we consider the following operation to construct a fold map
$f_{i+1}:M_{i+1} \rightarrow {\mathbb{R}}^4$ on a $7$-dimensional manifold $M_{i+1}$ from a fold map
$f_{i}:M_{i} \rightarrow {\mathbb{R}}^4$ on a $7$-dimensional manifold $M_{i}$ for each $i \geq 0$ one after another.

\begin{enumerate}
\item First we choose a $3$-dimensional standard sphere $S_i$ smoothly embedded in a connected component of the regular value set of $f_{i}$ so that the preimage is not empty and that
the preimage of the bounded connected component of ${\mathbb{R}}^4-S_i$ contains no singular point of index which is greater than $0$.
\item Consider a small closed tubular neighborhood $N(S_i)$ of $S_i$.
\item ${f_i} {\mid}_{{f_i}^{-1}(N(S_i))}:{f_i}^{-1}(N(S_i)) \rightarrow N(S_i)$ gives a trivial smooth $S^3$-bundle and replace this by the product map of the Morse function $f_{{{\mathbb{C}P}^2}^{\prime}}$ with the manifold of the target restricted to the image (as a result this function is surjective) and the identity map on a $3$-dimensional standard sphere to obtain a new fold map $f_{i+1}$ so that the restriction to the singular set is an embedding and that the number of connected components of the singular value set increases by one.
\end{enumerate}
By the definitions and properties of $f_{{{\mathbb{C}P}^2}^{\prime}}$ and the operation for example,
we can see that we can do the operations so that the following properties hold.
\begin{enumerate}
\item $M_{i+1}$ is simply-connected.
\item $H_2(M_{i+1};\mathbb{Z})$ is isomorphic to the direct sum of $H_2(M_i;\mathbb{Z}) \oplus \mathbb{Z}$ and we can set an isomorphism ${\phi}_{2,i}$ from the direct sum onto $H_2(M_{i+1};\mathbb{Z})$.
%\item $H_3(M_{i+1};\mathbb{Z})$ is isomorphic to $H_3(M_i;\mathbb{Z})$ and ${\oplus}_{j=1}^{a} {\mathbb{Z}}$ and we can set an isomorphism ${\phi}_{3,i}$ from the direct sum onto $H_3(M_{i+1};\mathbb{Z})$.
\item For the image of the original map $f_0$, represented as a boundary connected sum of $a$ copies of $S^3 \times D^1$, let $a_j$ denote a generator of the 3rd homology group of the $j$-th copy of $S^3 \times D^1$. $a_j$ can be represented by $3$-dimensional spheres $S_{j,i} \subset (S^3 \times {\rm Int}\  D^1) \bigcap ({\mathbb{R}}^4-f_{i}(S(f_i)))$ and $S_{j,i+1} \subset (S^3 \times {\rm Int}\  D^1) \bigcap ({\mathbb{R}}^4-f_{i+1}(S(f_{i+1})))$ which are smoothly and disjointly embedded and consider the preimages ${f_{i}}^{-1}(S_{j,i})$ and ${f_{i+1}}^{-1}(S_{j,i+1})$, where $S^3 \times D^1$ denotes the $j$-th copy. For the trivial smooth bundles given by the restrictions of $f_i$ and $f_{i+1}$ to the preimages, we can consider the images of suitable sections.
Let ${{\rm PD}}_i(a_j) \in H^4(M_{i})$ and ${{\rm PD}}_{i+1}(a_j) \in H^4(M_{i+1})$ denote the Poincar\'e duals to the images of the sections, which are $3$-dimensional spheres in the $7$-dimensional manifolds, respectively. For an integer $i>0$, assume that
the class ${\Sigma}_{j=1}^{a} {p}_{i+1,j} a_j \in H_3(W_{f_0};\mathbb{Z})$ is represented by $S_i$ where ${p}_{i+1,j}=0,1$. In this situation the square of the dual of ${\phi}_{2,i}(0 \oplus 1) \in H_2(M_{i+1})$ and ${\Sigma}_{j=1}^{a} {p}_{i+1,j} {{\rm PD}}_{i+1}(a_j)$ agree.
\end{enumerate}
Moreover, ${p}_{i_1,j}-{p}_{i_2,j} \leq 0$ for $i_1 \geq i_2>0$.
We can show the following theorem by applying a finite iteration of such operations starting from the original special generic map. The rigorous proofs are left to readers.

\begin{Thm}
\label{thm:7}
Let $A$ and $B$ be free finitely generated commutative groups whose ranks are $a$ and $b$, respectively. Let
$\{a_j\}_{j=1}^{a}$ and $\{b_j\}_{j=1}^{b}$ be bases of $A$ and $B$ respectively.
For each integer $1 \leq i \leq b$, let $\{p_{i,j}\}_{j=1}^{a}$ be a sequence of $0$ or $1$ such that $p_{i_1,j}-p_{i_2,j} \leq 0$ for $i_1 \geq i_2$.

In this situation, there exist an $7$-dimensional closed and simply-connected manifold $M$ and a fold map $f:M \rightarrow {\mathbb{R}^4}$ satisfying the following properties.
\begin{enumerate}
\item $H_2(M;\mathbb{Z})$ is isomorphic to $B$ and $H_3(M;\mathbb{Z})$ is isomorphic to $A$.
\item By universal coefficient theorem, $H^j(M;\mathbb{Z})$ is isomorphic to $H_j(M;\mathbb{Z})$. Via suitable isomorphisms ${\phi}_2:B \rightarrow H_2(M;\mathbb{Z})$ and ${\phi}_3:A \rightarrow H_3(M;\mathbb{Z})$, we can identify $H_2(M;\mathbb{Z})$ with $B$ and $H_3(M;\mathbb{Z})$ with $A$. Furthermore, we can define the duals ${a_j}^{\ast}$ and
${b_{j}}^{\ast}$ of ${\phi}_3(a_j)$ and ${\phi}_2 (b_j)$, respectively, and we can canonically obtain bases of
$H^3(M;\mathbb{Z})$ and $H^2(M;\mathbb{Z})$.
For the products, the following four hold.
\begin{enumerate}
\item The product of ${a_{j_1}}^{\ast}$ and ${a_{j_2}}^{\ast}$ vanishes for any $j_1$ and $j_2$.
\item The product of ${a_{i}}^{\ast}$ and ${b_{j}}^{\ast}$ is equal to $p_{j,i} {\rm PD}({\phi}_3(a_i))$ for any $i$ and $j$.
\item The product of ${b_{j_1}}^{\ast}$ and ${b_{j_2}}^{\ast}$ vanishes for any pair $(j_1,j_2)$ of distinct numbers.
\item The product of ${b_{i}}^{\ast}$ and ${b_{i}}^{\ast}$ is equal to ${\Sigma}_{j=1}^{a} p_{i,j} {\rm PD}({\phi}_3(a_j))$ where
${\rm PD}({\phi}_3(a_j))$ is the Poincar\'e dual to ${\phi}_3(a_j)$.
\end{enumerate}
\item $f {\mid}_{S(f)}$ is an embedding.
\item The index of each singular point is $0$ or $2$.
\item Preimages of regular values in the image of $f$ are diffeomorphic to $S^3$.
\end{enumerate}
\end{Thm}

As a key ingredient in rigorous proofs, we explain about the assumption $p_{i_1,j}-p_{i_2,j} \leq 0$. This respects the way we take embedded $3$-dimensional spheres $S_{j,i}$ and $S_{j,i+1}$ in the regular value sets of the maps and the homology classes represented by these embedded spheres in the images, which are represented as boundary connected sums of finitely many copies of $S^3 \times D^1$.
\begin{figure}
\includegraphics[width=50mm]{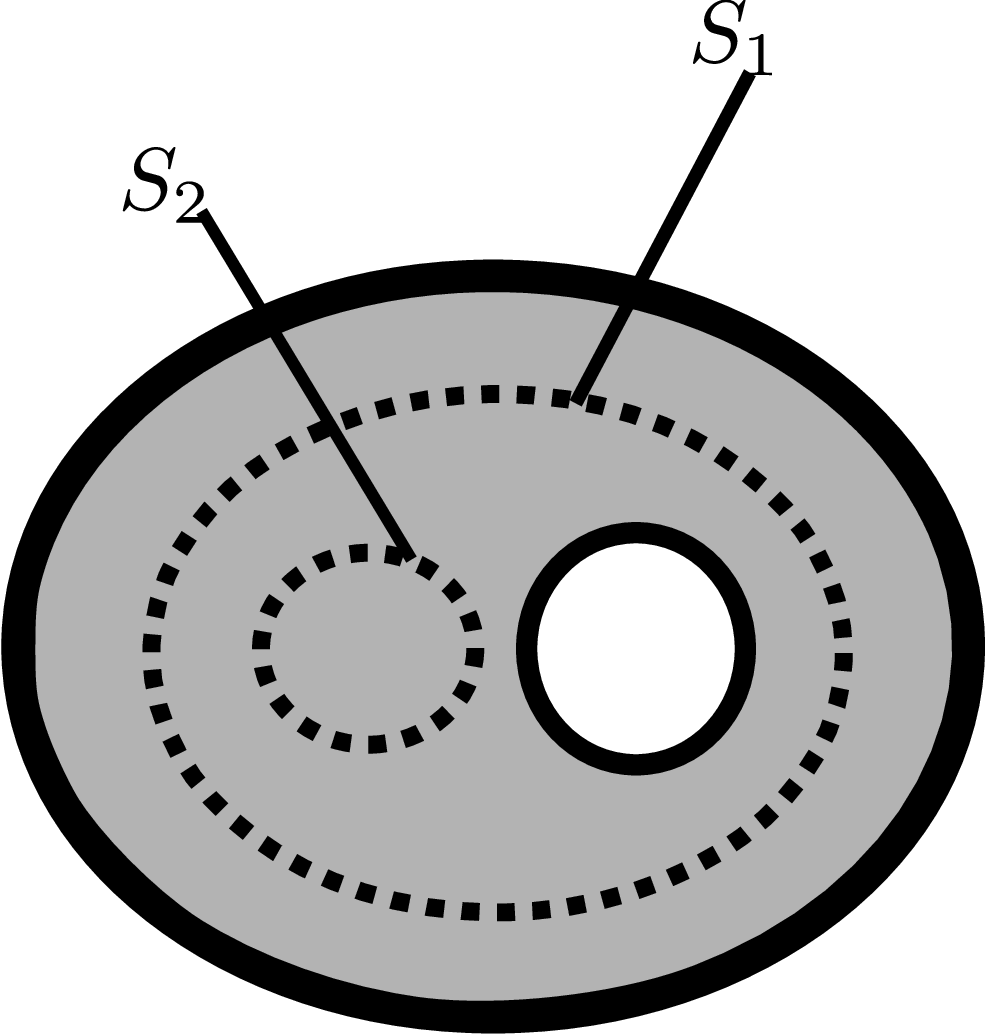}
\caption{The case $(a,b,p_{1,1},p{2,1})=(1,2,1,0)$ in Theorem \ref{thm:7} (An image).}
\label{fig:4}
\end{figure}

\subsection{A generalized version of Theorem \ref{thm:5} or \ref{thm:6}.}
Last, we present a generalized version of Theorem \ref{thm:5} (or \ref{thm:6}) except some properties such as ones on Pontryagin classes.

\begin{Thm}
\label{thm:8}
Let $k,m$ and $n$ be positive integers satisfying the relations $0<k \leq n-k<m-n$, $m-n \neq n$, $k+(m-n) \neq n$ and
$(n-k)+(m-n) \neq n$.
Let $A$, $B$ and $C$ be free finitely generated commutative groups whose ranks are $a$,$b$ and $c$, respectively. Let
$\{a_j\}_{j=1}^{a}$, $\{b_j\}_{j=1}^{b}$ and $\{c_j\}_{j=1}^{c}$ be bases of $A$, $B$ and $C$, respectively.
For each integer $1 \leq i \leq b$, let $\{a_{i,j}\}_{j=1}^{a}$ be a sequence of integers.

In this situation, there exist an $m$-dimensional closed and connected manifold $M$ whose Stiefel-Whitney classes and Pontryagin classes vanish and a fold map $f:M \rightarrow {\mathbb{R}^n}$ satisfying the following properties.
\begin{enumerate}
\item If $n-k \geq k>1$, then $M$ is simply-connected.
\item The homology group $H_{\ast}(M;\mathbb{Z})$ is free.
\item $H_k(M;\mathbb{Z})$ is isomorphic to $A$ if $k<n-k$ and $A \oplus B$ if $k=n-k$ and $H_n(M;\mathbb{Z})$ is isomorphic to $B \oplus C$.
\item By universal coefficient theorem, $H^j(M;\mathbb{Z})$ is isomorphic to $H_j(M;\mathbb{Z})$. We can take suitable monomorphisms ${\phi}_{A}:A \rightarrow H_{k}(M;\mathbb{Z})$ and ${\phi}_{B}:B \rightarrow H_{n-k}(M;\mathbb{Z})$ such that if $k=n-k$, then we can define an isomorphism by taking a suitable direct sum
${\phi}_{A} \oplus {\phi}_{B}:A \oplus B \rightarrow H_{k}(M;\mathbb{Z}) \oplus H_{n-k}(M;\mathbb{Z})$ and that if $k<n-k$, then these monomorphisms are isomorphisms. We can take an isomorphism
${\phi}_n:B \oplus C \rightarrow H_n(M;\mathbb{Z})$. Furthermore, we can define the duals ${a_j}^{\ast}$, ${b_{j,n-k}}^{\ast}$, ${b_{j,n}}^{\ast}$ and ${c_j}^{\ast}$ of ${\phi}_{A}(a_j)$, ${\phi}_{B}(b_j)$, ${\phi}_n((b_j,0))$ and ${\phi}_n((0,c_j))$, respectively, and we can canonically obtain bases of suitable submodules of $H^k(M;\mathbb{Z})$ and $H^{n-k}(M;\mathbb{Z})$ and a basis of $H^n(M;\mathbb{Z})$ by taking the suitable duals. Let the composition of ${\phi}_n$ with an isomorphism mapping the elements of the basis to their Poincar\'e duals be denoted by
${\phi}_{n,m-n}:B \oplus C \rightarrow H^{m-n}(M;\mathbb{Z})$. Let the composition of ${\phi}_{A}$ with an isomorphism mapping the elements of the basis to their Poincar\'e duals be denoted by
${\phi}_{A,m-k}:A \rightarrow H^{m-k}(M;\mathbb{Z})$. Let the composition of ${\phi}_{B}$ with an isomorphism mapping the elements of the basis to their Poincar\'e duals be denoted by ${\phi}_{B,m-n+k}:B \rightarrow H^{m-k}(M;\mathbb{Z})$.
For the products, we have the following cases for example.
\begin{enumerate}
\item 
%Products are in $H^{i}(M;\mathbb{Z})$ for $i=2k,n,2(n-k)$: some products may not be in the list and are in the lists of the remaining cases.
\begin{enumerate}
\item The product of ${a_{j_1}}^{\ast}$ and ${a_{j_2}}^{\ast}$ and that of ${b_{j_1,n-k}}^{\ast}$ and ${b_{j_2,n-k}}^{\ast}$ vanish for any $j_1$ and $j_2$.
\item The product of ${a_{j_1}}^{\ast}$ and ${b_{j_2,n-k}}^{\ast}$ is $a_{j_2,j_1}{b_{j_2,n}}^{\ast} \in H^n(M;\mathbb{Z})$ for any $j_1$ and $j_2$.
\end{enumerate}
\item
% Products are in $H^{m-n+k}(M;\mathbb{Z})$ or $H^{m-k}(M;\mathbb{Z})$: some products may not be in the list and are in the lists of the remaining cases.
\begin{enumerate}
\item The product of ${a_j}^{\ast}$ and ${\phi}_{n,m-n}((0,c))$ vanishes for any $j$ and $c \in C$.
\item The product of ${b_{j,n-k}}^{\ast}$ and ${\phi}_{n,m-n}((0,c))$ vanishes for any $j$ and $c \in C$.
\item The product of ${b_{j_1,n-k}}^{\ast}$ and ${\phi}_{n,m-n}((b_{j_2},0))$ vanishes for any pair $(j_1,j_2)$ of distinct numbers.
\item The product of ${a_{j_1}}^{\ast}$ and ${\phi}_{n,m-n}((b_{j_2},0))$ is
$a_{j_2,j_1}{\phi}_{B,m-n+k}(b_{j_2}) \in H^{m-n+k}(M;\mathbb{Z})$ for any $j_1$ and $j_2$.
\item The product of ${b_{i,n-k}}^{\ast}$ and ${\phi}_{n,m-n}((b_{i},0))$ is
${\Sigma}_{j=1}^{a} a_{i,j}{\phi}_{A,m-k}(a_j) \in H^{m-k}(M;\mathbb{Z})$.
\end{enumerate}
\item
% Products are in $H^{m-n+2k}(M;\mathbb{Z})$, $H^{m+n-2k}(M;\mathbb{Z})$ or $H^n(M;\mathbb{Z})$: some products may not be in the list and are in the lists of the remaining cases.
\begin{enumerate}
\item The product of  ${a_j}^{\ast}$ and ${\phi}_{A,m-k}(a_j)$ forms a generator of $H^m(M;\mathbb{Z})$.
\item The product of  ${a_{j_1}}^{\ast}$ and ${\phi}_{A,m-k}(a_{j_2})$ vanishes for any pair $(j_1,j_2)$ of distinct numbers.
\item The product of  ${a_j}^{\ast}$ and ${\phi}_{B,m-n+k}(b)$ vanishes for any $j$ and $b \in B$.
\item The product of  ${b_{j,n-k}}^{\ast}$ and ${\phi}_{A,m-k}(a)$ vanishes for any $j$ and $a \in A$.
\item The product of  ${b_{j,n-k}}^{\ast}$ and ${\phi}_{B,m-n+k}(b_j)$ forms a generator of $H^m(M;\mathbb{Z})$.
\item The product of  ${b_{j_1,n-k}}^{\ast}$ and ${\phi}_{B,m-n+k}(b_{j_2})$ vanishes for any pair $(j_1,j_2)$ of distinct numbers.
\item The product of ${b_{j,n}}^{\ast}$ and ${\phi}_{n,m-n}((b_j,0))$ forms a generator of $H^m(M;\mathbb{Z})$.
\item The product of  ${b_{j_1,n}}^{\ast}$ and ${\phi}_{n,m-n}((b_{j_2},0))$ vanishes for any pair $(j_1,j_2)$ of distinct numbers.
\item The product of  ${b_{j,n}}^{\ast}$ and ${\phi}_{n,m-n}((0,c))$ vanishes for any $j$ and $c \in C$.
\item The product of  ${c_j}^{\ast}$ and ${\phi}_{n,m-n}((b,0))$ vanishes for any $j$ and $b \in B$.
\item The product of ${c_j}^{\ast}$ and ${\phi}_{n,m-n}((0,c_j))$ forms a generator of $H^m(M;\mathbb{Z})$.
\item The product of ${c_{j_1}}^{\ast}$ and ${\phi}_{n,m-n}((0,c_{j_2}))$ vanishes for any pair $(j_1,j_2)$ of distinct numbers.
\end{enumerate}
\end{enumerate}
\item $f {\mid}_{S(f)}$ is an embedding.
\item The index of each singular point is $0$ or $1$.
\item Preimages of regular values in the image of $f$ are diffeomorphic to $S^{m-n}$ or $S^{m-n} \sqcup S^{m-n}$.
\end{enumerate}
\end{Thm}
We can take $(m,n,k)=(7,4,2)$ and this yields Theorem \ref{thm:5} (or \ref{thm:6}) as a specific case of Theorem \ref{thm:8} except properties on
(non-vanishing) Stiefel-Whitney classes and Pontryagin classes. We can prove Theorem \ref{thm:8} almost similarly. The rigorous proofs are left to readers.

\begin{Rem}
\label{rem:2}
In \cite{kitazawa6}, homology groups and cohomology rings of {\it Reeb spaces} of fold maps are explicitly
studied. {\it Reeb spaces} of fold maps are defined as the spaces of all connected components of preimages of maps and defined as the quotient spaces of the manifolds.

Reeb spaces are in suitable cases inherit invariants of the manifolds such as homology groups, cohomology rings, and other kinds of invariants, considerably.

In the paper, the Reeb spaces of fold maps such that preimages of regular values are disjoint unions of standard spheres and that the restrictions to the singular sets are embeddings are systematically studied.
Proposition \ref{prop:6} and Theorems \ref{thm:5}--\ref{thm:7} are results on such maps. In \cite{kitazawa6}, the fact that for sufficiently high dimensional manifolds of the domains, the Reeb spaces inherit invariants before well is also presented for example.

However, only by discussions in the paper, it is difficult to understand higher degree parts of cohomology rings of the manifolds well for example. For homology groups, we can know well in cases where the dimensions of the manifolds are sufficiently high (or greater than or equal to the two times the dimensions of the manifolds of the targets). However, if the dimensions are not sufficiently high, then it is difficult to know the homology groups completely.

As a new work, in the present paper, homology groups and cohomology rings of manifolds admitting these fold maps are investigated including cases where the dimensions of the manifolds are not sufficiently high.
\end{Rem}

%\begin{Ex}
%\label{ex:6}

%\begin{Thm}
%\end{Thm}
\section{Acknowledgement.}
\thanks{The author is a member of and supported by the project Grant-in-Aid for Scientific Research (S) (17H06128 Principal Investigator: Osamu Saeki)
"Innovative research of geometric topology and singularities of differentiable mappings''
(https://kaken.nii.ac.jp/en/grant/KAKENHI-PROJECT-17H06128/ : Principal Investigator is Osamu Saeki). The author would like to thank Osamu Saeki and all colleagues supported the project and the author, who give useful and interesting comments on the present paper.}

We declare that all data supporting our present study is in the present paper.


\begin{thebibliography}{30}
%\bibitem{akhmetev} P. M. Akhmet'ev, \textsl{On an isotopic and a discrete realization of mappings of an $n$-dimensional sphere in Euclidean space}, Mat. Sb. 187 (1996), 3--34.
%\bibitem{barden} D. Barden, \textsl{Simply Connected Five-Manifolds}, Ann. of Math. (3) 82 (1965), 365--385.
% \bibitem{burletderham} O. Burlet and G. de Rham, \textsl{Sur certaines applications generiques d'une variete close a $3$ dimensions dans le plan}, Enseign. Math. 20 (1974). 275--292.
% \bibitem{choimasudasuh} S. Choi, M. Masuda and D. Y. Suh, \textsl{Topological classification of generalized Bott towers}, Trans. Amer. Math. Soc. 362 (2010), 1097--1112.
% \bibitem{cerf} J. Cerf, \textsl{La stratification naturelle des espaces de fonctions deff\'erentiables r'eelles et le th'eor`eme de la pseudo-isotopie}, Inst. Hautes Etudes Sci. Publ. Math. 39 (1970), 5--173.
\bibitem{crowleyescher} D. Crowley and C. Escher, \textsl{A classification of $S^3$-bundles over $S^4$}, Differential. Geom. Appl. 18 (2003), 363--380, arxiv:0004147.
\bibitem{crowleynordstrom} D. Crowley and J. Nordstr\"{o}m, \textsl{The classification of $2$-connected $7$-manifolds}, Proc. London. Math. Soc. 119 (2019), 1--54, arxiv:1406.2226.
\bibitem{eellskuiper} J. J. Eells and N. H. Kuiper, \textsl{An invariant for certain smooth manifolds}, Ann. Mat. Pura Appl. 60 (1962), 93--110.
%\bibitem{ehresmann} C. Ehresmann, \textsl{Les connexions infinitesimales dans un espace fibre differentiable}, Colloque de Topologie, Bruxelles (1950), 29--55.
\bibitem{eliashberg} Y. Eliashberg, \textsl{On singularities of folding type}, Math. USSR Izv. 4 (1970). 1119--1134.
\bibitem{eliashberg2} Y. Eliashberg, \textsl{Surgery of singularities of smooth mappings}, Math. USSR Izv. 6 (1972). 1302--1326.
% \bibitem{furuyaporto} Y. K. S. Furuya and P. Porto, \textsl{On special generic maps from a closed manifold into the plane}, Topology Appl. 35 (1990), 41--52.
\bibitem{golubitskyguillemin} M. Golubitsky and V. Guillemin, \textsl{Stable mappings and their singularities}, Graduate Texts in Mathematics (14), Springer-Verlag (1974).
% \bibitem{haefliger} A. Haefliger, \textsl{Knotted {\rm (}$4k-1${\rm )}-spheres in $6k$-space}, Ann. of Math. 75 (1962), 452--466.
\bibitem{hatcher} A. E. Hatcher, \textsl{A proof of the Smale conjecture}, Ann. of Math. 117 (1983), 553--607.
% \bibitem{hatcher} A. Hatcher, \textsl{Algebraic Topology}, A modern, geometrically flavored introduction to algebraic topology, Cambridge: Cambridge University Press (2002).
% \bibitem{hiratuka} J. T. Hiratuka, \textsl{A fatorizacao de Stein e o numero de singularidades de aplicacoes estaveis} (in
% Portuguese), PhD Thesis, University of Sao Paulo (2001).
% \bibitem{kitazawa} N. Kitazawa, \textsl{Gluings and decompositions of spaces and maps}, in preparation.
\bibitem{kitazawa} N. Kitazawa, \textsl{On round fold maps} (in Japanese), RIMS Kokyuroku Bessatsu B38 (2013), 45--59.
\bibitem{kitazawa2} N. Kitazawa, \textsl{On manifolds admitting fold maps with singular value sets of concentric spheres}, Doctoral Dissertation, Tokyo Institute of Technology (2014).
\bibitem{kitazawa3} N. Kitazawa, \textsl{Fold maps with singular value sets of concentric spheres}, Hokkaido Mathematical Journal Vol.43, No.3 (2014), 327--359.
%\bibitem{kitazawa3-1} N. Kitazawa, \textsl{Constructions of round fold maps on smooth bundles}, Tokyo J. of Math. Volume 37, Number 2, 385--403, arxiv:1305.1708.
\bibitem{kitazawa4} N. Kitazawa, \textsl{Round fold maps and the topologies and the differentiable structures of manifolds admitting explicit ones}, submitted to a refereed journal, arXiv:1304.0618 (the title has changed).
\bibitem{kitazawa5} N. Kitazawa, \textsl{Constructing fold maps by surgery operations and homological information of their Reeb spaces}, submitted to a refereed journal, arxiv:1508.05630 (the title has been changed).
\bibitem{kitazawa6} N. Kitazawa, \textsl{Notes on fold maps obtained by surgery operations and algebraic information of their Reeb spaces}, arxiv:1811.04080.
%\bibitem{kitazawa7} N. Kitazawa \textsl{New observations on cohomology rings of Reeb spaces of explicit fold maps and manifolds admitting these maps}, arxiv:1911.09164.
% \bibitem{kobayashi} M. Kobayashi, \textsl{Simplifying certain stable mappings from simply connected %R% $4$-manifolds into the plane}, Tokyo J. Math. 15 (1992), 327--349.
\bibitem{kobayashi} M. Kobayashi, \textsl{Stable mappings with trivial monodromies and application to inactive log-transformations}, RIMS Kokyuroku. 815 (1992), 47--53.
\bibitem{kobayashi2} M. Kobayashi, \textsl{Bubbling surgery on a smooth map}, preprint.
\bibitem{kobayashisaeki} M. Kobayashi and O. Saeki, \textsl{Simplifying stable mappings into the plane from a global viewpoint}, Trans. Amer. Math. Soc. 348 (1996), 2607--2636.
\bibitem{kreck} M. Kreck, \textsl{On the classification of $1$-connected $7$-manifolds with torsion free second homology}, to appear in the Journal of Topology, arxiv:1805.02391.
% \bibitem{kurokisuh} S. Kuroki and D. Y. Suh, \textsl{Cohomological non-rigidity of eight-dimensional complex projective
% towers}, to appear in Algebraic \& Geometric Topology; OCAMI preprint series 13-12.
\bibitem{milnor} J. Milnor, \textsl{On manifolds homeomorphic to the $7$-sphere}, Ann. of Math. (2) 64 (1956), 399--405.
\bibitem{milnor2} J. Milnor, \textsl{Morse Theory}, Annals of Mathematic Studies AM-51, Princeton University Press; 1st Edition (1963.5.1).
\bibitem{milnor3} J. Milnor, \textsl{Lectures on the h-cobordism theorem}, Math. Notes, Princeton Univ. Press, Princeton, N.J. 1965.
\bibitem{milnorstasheff} J. Milnor and J. Stasheff, \textsl{Characteristic classes}, Annals of Mathematics Studies, No. 76. Princeton, N. J; Princeton University Press (1974).
%\bibitem{murai} T. Murai, \textsl{{\rm (}Master's thesis : in Japanese{\rm )}}, Master's thesis, Tsuda. Univ.
%\bibitem{nakamura} K. Nakamura, K. Nakamura, \textsl{{\rm (}Bachelor's thesis : in Japanese{\rm )}}, Hirosdhima Univ.
\bibitem{nishioka} M. Nishioka, \textsl{Special generic maps of $5$-dimensional manifolds}, Revue Roumaine de Math\`{e}matiques Pures et Appliqu\`{e}es, Volume LX No.4 (2015), 507--517.
\bibitem{reeb} G. Reeb, \textsl{Sur les points singuliers d\`{u}ne forme de Pfaff completement integrable ou d’une fonction numerique}, -C. R. A. S. Paris 222 (1946), 847--849.
% \bibitem{saeki} O. Saeki, \textsl{Notes on the topology of folds}, J. Math. Soc. Japan Volume 44, Number 3 (1992), 551--566.
\bibitem{saeki} O. Saeki, \textsl{Topology of special generic maps of manifolds into Euclidean spaces}, Topology Appl. 49 (1993), 265--293.
\bibitem{saeki2} O. Saeki, \textsl{Topology of special generic maps into $\mathbb{R}^3$}, Workshop on Real and Complex Singularities (Sao Carlos, 1992), Mat. Contemp. 5 (1993), 161--186.
% \bibitem{saeki4} O. Saeki, \textsl{Morse functions with sphere fibers}, Hiroshima Math. J. Volume 36, Number 1 (2006), 141--170.
\bibitem{saekisakuma} O. Saeki and K. Sakuma, \textsl{On special generic maps into ${\mathbb{R}}^3$}, Pacific J. Math. 184 (1998), 175--193.
%\bibitem{saekisakuma2} O. Saeki and K. Sakuma, \textsl{On special generic maps into ${\mathbb{R}}^3$}, Pacific J. Math. 184 (1998), 175--193.
\bibitem{saekisakuma2} O. Saeki and K. Sakuma, \textsl{Special generic maps of $4$-manifolds and compact complex analytic surfaces}, Math. Ann. 313, 617--633, 1999.
\bibitem{saekisuzuoka} O. Saeki and K. Suzuoka, \textsl{Generic smooth maps with sphere fibers} J. Math. Soc. Japan Volume 57, Number 3 (2005), 881--902.
%\bibitem{sakuma} K. Sakuma, \textsl{On special generic maps of simply connected $2n$-manifolds into ${\mathbb{R}}^3$},
%Topology Appl. 50 (1993), 249--261.
% \bibitem{sakuma} K. Sakuma, \textsl{On the topology of simple fold maps}, Tokyo J. of Math. Volume 17, Number 1 (1994), 21--32.
%\bibitem{shiota} M. Shiota, \textsl{Thom's conjecture on triangulations of maps}, Topology 39 (2000), 383--399.
% \bibitem{smale} S. Smale, \textsl{Generalized Poincare's conjecture in dimensions greater than four}, Ann. of Math. (2) 74 (1961) 391--406.
% \bibitem{smale2} S. Smale, \textsl{On the structure of manifolds}, Amer. J. Math 84 (1962), 387--399.
\bibitem{steenrod} N. Steenrod, \textsl{The topology of fibre bundles}, Princeton University Press (1951).
%\bibitem{suzuoka} K. Suzuoka, \textsl{On the topology of sphere type fold maps of $4$-manifolds into the plane}, PhD Thesis, Univ. of Tokyo (March 2002).
\bibitem{thom} R. Thom, \textsl{Les singularites des applications differentiables}, Ann. Inst. Fourier (Grenoble) 6 (1955-56), 43--87.
%\bibitem{wall} C.T.C. Wall, \textsl{All $3$-manifolds embed in $5$-space}, Bull. Amer. Math. Soc. 71 (1965), 564--567.
% \bibitem{wall} C. T. C. Wall, \textsl{Classification of {\rm (}$n-1${\rm )}-Connected $2n$-Manifolds}, Ann. of Math. Second Series (1) 75 (Jan.1962), 163--189.
% \bibitem{wall} C. T. C. Wall, \textsl{Classification problems in differential topology. V. On certain $6$-manifolds}, Invent. Math. 1 (1966), 355--374.
\bibitem{whitney} H. Whitney, \textsl{On singularities of mappings of Euclidean spaces: I, mappings of the plane into the plane}, Ann. of Math. 62 (1955), 374--410.
\bibitem{wrazidlo} D. J. Wrazidlo, \textsl{Standard special generic maps of homotopy spheres into Eucidean spaces}, Topology Appl. 234 (2018), 348--358, arxiv:1707.08646.
%\bibitem{yamamoto} M. Yamamoto, \textsl{Lifting a generic map of a surface into the plane to an embedding into $4$-space}, Illinois Journal of Mathematics 51 (2007), 705--721.
\end{thebibliography}
\end{document}